\documentclass[12pt]{article}

\oddsidemargin .1cm \evensidemargin .1cm \textwidth=17cm
\textheight=21truecm \unitlength=1cm
\parskip 3mm
\baselineskip 10pt

\usepackage{amsthm,amsmath,amsfonts,amssymb,yfonts}
\usepackage[english]{babel}
\usepackage{graphicx}
\usepackage{verbatim}
\usepackage{bm}
\usepackage{color}
\usepackage{float}
 \usepackage{booktabs}
 \usepackage[table,xcdraw]{xcolor}

\usepackage{enumitem}

\usepackage{tikz,lipsum,lmodern}
\usepackage[most]{tcolorbox}
\usepackage{multicol}
\usepackage{tabularx}
\usepackage[utf8]{inputenc}
\usepackage{array}
\usepackage{wrapfig}
\usepackage{multirow}

\usepackage[utf8]{inputenc}
\usepackage[T1]{fontenc}
\usepackage{authblk}

%%% to enumerate equations

\numberwithin{equation}{subsection}

\let\oldsection\section% Store \section
\renewcommand{\section}{% Update \section
	\renewcommand{\theequation}{\thesection.\arabic{equation}}% Update equation number
	\oldsection}% Regular \section
\let\oldsubsection\subsection% Store \subsection
\renewcommand{\subsection}{% Update \subsection
	\renewcommand{\theequation}{\thesubsection.\arabic{equation}}% Update equation number
	\oldsubsection}% Regular \subsection

\newtheorem{theorem}{Theorem}[section]
\newtheorem{proposition}[theorem]{Proposition}
\newtheorem{lemma}[theorem]{Lemma}

\newtheorem{definition}[theorem]{Definition}

\newtheorem{remark}[theorem]{Remark}

%\newtheorem{problem}{Problem}
%\newenvironment{proofof}[1]{\noindent{\bf Proof of #1.}~}{\qed}
%\newenvironment{definition}{\noindent{\bf Definition.}~}{}

%\def\proof{\noindent{\bf Proof}\hskip 0.3truecm}
%\def\endproof{\ \  $\Box$\vskip .2cm}
%\date{30 mar\c{c} 2007}

%%%for the proofs

%\renewcommand\Authfont{\scshape}

\title{Construction of directed strongly regular graphs  with nontrivial automorphisms}

\author{
	{Marija Maksimović}\\
	{\small Faculty of Mathematics}\\
	{\small University of Rijeka}\\
	{\small Radmile Matejčić 2, 51000 Rijeka, Croatia }\\
	{\small mmaksimovic@math.uniri.hr} \and 
	{Sanja Rukavina}\\
	{\small Faculty of Mathematics}\\
	{\small University of Rijeka}\\
	{\small Radmile Matejčić 2, 51000 Rijeka, Croatia }\\
	{\small sanjar@math.uniri.hr}
}
\begin{document}
\maketitle
\begin{abstract}
In this paper we present a method for constructing directed strongly regular graphs with assumed action of an automorphism group. The application of this method leads to first examples of directed strongly regular graphs with parameters $(22,9,6,3,4)$. We have shown that an automorphism of prime order acting on such a graph can only be of order two or three. Furthermore, we have constructed $472$ directed strongly regular graphs with parameters $(22,9,6,3,4)$ and classified all these graphs with an automorphism of order three.
\end{abstract}

{\bf Keywords:} directed graph, strongly regular graph, automorphism group

{\bf Mathematical subject classification (2020):} 05C20,     05E18, 05E30

\section{Introduction} 

We assume familiarity with the basic facts and concepts of graph theory. For further reading on the topic, we refer the reader to \cite{BCN, bro2, duval}.

In \cite{duval} Duval introduced the directed graph version of a strongly regular graph. A {\it directed graph} is a pair $\Gamma=(V,E)$ consisting of a set $V$, referred to as the vertex set of $\Gamma$
and a set $E \subseteq V \times V$, which is called the edge set of $\Gamma$. A {\it directed strongly regular graph with parameters} $(v,k,t, \lambda, \mu)$ is a directed graph $\Gamma=(V,E)$ on $v$ vertices without loops such that
 each vertex $x \in V$ has in-degree and out-degree $k$,
	 each vertex $x \in V$ has $t$ out-neighbors that are also in-neighbors of $x$,
 and for each pair of vertices $x, y \in V$, the number of directed paths of length two from $x$ to $y$ is $\lambda$ if $(x,y)\in E$, and $\mu$ if $(x,y)\notin E$.
We call such a graph $\Gamma$ a $DSRG(v,k,t,\lambda,\mu).$
 Its adjacency matrix $A= [a_{ij}]$ is a $(0,1)$-matrix with rows and columns indexed by $V$ and defined by $a_{xy}=1$ if and only if $(x,y)\in E$.
The adjacency matrix $A$ of $\Gamma$ satisfies
\begin{gather}
	A^2+(\mu-\lambda)A-(t-\mu)I=\mu J \label{1}\\
	AJ=JA=kJ \label{2}
\end{gather}
where $I$ denotes the identity matrix and $J$ the all-1 matrix of order $v$.

Since the introduction of directed strongly regular graphs by Duval \cite{duval}, this concept has attracted the attention of many researchers. An overview of the known results and parameters for which such graphs have been constructed is presented in \cite{bro1}.
According to \cite{bro1}, the smallest open case of admissible parameters for which the existence of a directed strongly regular graph has not been proved is $(22,9,6,3,4)$. We have not found an example of such a graph in the literature either.

In this paper we give a method for the construction of directed strongly regular graphs with presumed action of an automorphism group by adapting the method used for the construction of other types of incidence structures, including strongly regular graphs (see \cite{beh,cm}). The method is based on a construction of orbit matrices, which are then refined to obtain the adjacency matrix of a directed strongly regular graph.

 The work is structured as follows. After describing the basic properties of orbit matrices of directed strongly regular graphs and introducing the notation  in Section \ref{om}, a method for constructing directed strongly regular graphs from their orbit matrices is presented in Section \ref{con}. By applying the developed method to a construction of directed strongly regular graphs with parameters $(22,9,6,3,4)$, we constructed $472$ nonisomorphic graphs in Section \ref{22}. These are the first examples of directed strongly regular graphs with parameters $(22,9,6,3,4)$. Moreover, we show that an automorphism of prime order acting on such a graph can only be of order two or three, and classify all DSRGs$(22,9,6,3,4)$ with an automorphism of order three. Our results are summarized in Section \ref{concl}.

The orbit matrices and directed strongly regular graphs constructed in this paper were constructed and analyzed using GAP \cite{GAP}.

\section{Orbit matrices of directed strongly regular graphs} \label{om}

Orbit matrices of strongly regular graphs have been considered in \cite{beh,cm}. The definition and some properties of orbit matrices of directed strongly regular graphs can be found in \cite{csz}, where orbit matrices of known directed strongly regular graphs were used to construct new directed strongly regular graphs with prescribed automorphism group using a genetic algorithm. Our goal is to adapt the method used for the construction of other types of incidence structures (see, for example, \cite{beh,cm,c-r-metrika,janko}) and to construct matrices corresponding to the definition of orbit matrices of directed strongly regular graphs, which are then used for the construction of all nonisomorphic directed strongly regular graphs resulting from the obtained matrices. Thus, in this section we give an overview of the basic facts and properties of orbit matrices that we need for our construction and introduce the notation for the rest of the paper.

Let $\Gamma=(V,E)$ be a \text{DSRG}$(v, k, t, \lambda, \mu)$, and let $A$ be its adjacency matrix. 
An \emph{automorphism} of $\Gamma$ is a permutation $\sigma$ of the vertex set $V$ such that, for every pair of vertices $x,y \in V$, $(x,y) \in E$  if and only if $(\sigma(x),\sigma(y)) \in E$.
The set of all automorphisms of $\Gamma$ forms a group under composition called the \textit{full automorphism group} of $\Gamma$ and denoted by $Aut(\Gamma)$. 
Suppose an automorphism group $G$ of $\Gamma$ partitions the vertex set $V$ into $b$ orbits $O_1, O_2, \dots, O_b$ with corresponding sizes $n_1, n_2, \dots, n_b$, respectively. The adjacency matrix $A$ can be subdivided into submatrices $[A_{ij}]$, where   $A_{ij}$ represents the submatrix whose rows and columns are indexed by  elements of $O_i$ and $O_j$, respectively.

Define two matrices $C = [c_{ij}]$ and $R = [r_{ij}]$ for $1 \leq i, j \leq b$, such that:
\[
c_{ij} = \text{column sum of } A_{ij}, \quad r_{ij} = \text{row sum of } A_{ij}.
\]
$C$ is the \emph{column orbit matrix} of $\Gamma$ with respect to the group $G$, and $R$ is the \emph{row orbit matrix} of $\Gamma$ with respect to the group $G$. The matrices $C$ and $R$ are connected by the following equation:
\begin{align}
r_{ij} n_i = c_{ij} n_j \label{3}.
\end{align}
Let $W = A^2$. Using the same partitioning of the vertex set $V$ into orbits, the matrix $W$ can be partitioned as $W = [W_{ij}]$, where $i$ and $j$ are the indices of the orbits.

Finally, let $N = \text{diag}(n_1, n_2, \dots, n_b)$ and define a $b \times b$ matrix $S = [s_{ij}]$, where $s_{ij}$ is equal to the sum of all entries in $W_{ij}$. The following result will be useful.

\begin{proposition}\label{cnr} With reference to the above notations, the following holds:
	$$CNR=S.$$
	\end{proposition}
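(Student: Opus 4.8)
The plan is to prove the matrix identity $CNR=S$ entrywise, by computing the $(i,j)$ entry of the product $CNR$ and the entry $s_{ij}$ of $S$ independently and checking that they agree. Since $N=\operatorname{diag}(n_1,\dots,n_b)$ is diagonal, so that $N_{\ell m}=n_\ell\delta_{\ell m}$, expanding the two matrix products immediately gives
$$
(CNR)_{ij}=\sum_{\ell=1}^{b}\sum_{m=1}^{b} c_{i\ell}\,N_{\ell m}\,r_{mj}=\sum_{\ell=1}^{b} c_{i\ell}\,n_\ell\,r_{\ell j}.
$$
Thus the whole task reduces to showing that $s_{ij}$ equals this same sum over the middle orbit index $\ell$.

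The key preliminary observation, which I would establish first, is that within each block $A_{ij}$ every column sum equals the common value $c_{ij}$ and every row sum equals the common value $r_{ij}$; in other words, these block sums are well defined. This is precisely the point where the hypothesis that $G$ acts by automorphisms enters. Given two columns of $A_{ij}$ indexed by $y,y'\in O_j$, choose $g\in G$ with $g(y)=y'$; because $g$ preserves adjacency we have $a_{xy}=a_{g(x)\,g(y)}=a_{g(x)\,y'}$, and because $g$ maps the orbit $O_i$ onto itself, $g(x)$ runs over $O_i$ as $x$ does. Hence the two column sums coincide, and the analogous argument for rows shows $r_{ij}$ is likewise independent of the chosen row.

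With this in hand I would compute $s_{ij}$ directly from $W=A^2$. Writing the $(x,y)$ entry of $W$ as $\sum_{z\in V} a_{xz}a_{zy}$, summing over $x\in O_i$ and $y\in O_j$, and splitting the sum over $z$ according to the orbit $O_\ell$ containing $z$, I obtain
$$
s_{ij}=\sum_{x\in O_i}\sum_{y\in O_j}\sum_{\ell=1}^{b}\sum_{z\in O_\ell}a_{xz}\,a_{zy}
=\sum_{\ell=1}^{b}\sum_{z\in O_\ell}\Bigl(\sum_{x\in O_i}a_{xz}\Bigr)\Bigl(\sum_{y\in O_j}a_{zy}\Bigr).
$$
For a fixed $z\in O_\ell$, the factor $\sum_{x\in O_i}a_{xz}$ is a column sum of the block $A_{i\ell}$, hence equals $c_{i\ell}$ by the preliminary observation, while $\sum_{y\in O_j}a_{zy}$ is a row sum of $A_{\ell j}$ and equals $r_{\ell j}$. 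Since both factors are independent of the particular $z\in O_\ell$, the inner summation over $z$ simply contributes a factor $n_\ell=|O_\ell|$, giving $s_{ij}=\sum_{\ell=1}^{b}c_{i\ell}\,n_\ell\,r_{\ell j}=(CNR)_{ij}$, which is what we want.

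The index bookkeeping in this double count is routine; the only genuine content is the constancy of row and column sums within each block, established in the second step. I therefore expect that step—rather than the algebra—to be the one requiring care, since it is exactly what makes the matrices $C$ and $R$, and hence the statement itself, meaningful.
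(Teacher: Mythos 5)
Your proof is correct and is essentially the paper's own argument: the authors compress your triple sum into the identity $s_{ij}=\alpha_iA^2\alpha_j^T=(\alpha_iA)(A\alpha_j^T)=\sum_z c_{iz}r_{zj}n_z$ using orbit indicator vectors, which is exactly your computation grouped by the orbit of the middle index. Your explicit verification that the block row and column sums are well defined is left implicit in the paper (it is built into the definitions of $C$ and $R$), but it is the same approach, not a different one.
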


\begin{proof}
	Let $\alpha_i$ be a vector of size $v$ such that
	$$\alpha_i(j)=\begin{cases}
		1, \quad j\in O_i\\
		0, \quad j\notin O_i
	\end{cases}.$$
 Then we have
$$s_{ij}=\alpha_iA^2\alpha_j^T=(\alpha_iA)(A\alpha_j^T)=\sum_{z=1}^b c_{iz} r_{zj }n_z =(CNR)_{ij}.$$ 
\end{proof}

From (\ref{1}) it follows that
$W_{ij}=\delta_{ij}(t-\mu)I+\mu J+(\lambda-\mu) A_{ij}$, and therefore
\begin{align*}
	s_{ij}=&\delta_{ij}(t-\mu)n_j+\mu n_i n_j +(\lambda-\mu)c_{ij}n_j\\
	=&\delta_{ij}(t-\mu)n_i+\mu n_i n_j +(\lambda-\mu)r_{ij}n_i.
\end{align*}

Furthermore, from  (\ref{3}) and Proposition \ref{cnr} we have  
	$$s_{ij}=\sum_{z=1}^{b}c_{iz}r_{zj}n_z
	=\sum_{z=1}^{b}c_{iz}c_{zj}n_j
	=\sum_{z=1}^{b}r_{iz}r_{zj}n_i.$$
It follows that the elements of the column orbit matrix $C$ fulfil the  following conditions
$$\delta_{ij}(t-\mu)n_j+\mu n_i n_j +(\lambda-\mu)c_{ij}n_j=\sum_{z=1}^{b}c_{iz}c_{zj}n_j,$$
that is
$$\delta_{ij}(t-\mu)+\mu n_i +(\lambda-\mu)c_{ij}=\sum_{z=1}^{b}c_{iz}c_{zj},$$
while for the elements of the row orbit matrix $R$ applies in a similar way 
$$\delta_{ij}(t-\mu)+\mu n_j +(\lambda-\mu)r_{ij}=\sum_{z=1}^{b}r_{iz}r_{zj}.$$
 
Therefore, we introduce the following definitions of the orbit matrices of DSRGs (see \cite{csz}, Definitions 3.3 and 3.4).

\begin{definition} \label{row}
	A $(b \times b)$-matrix $R = [r_{ij}]$ with entries satisfying conditions:
	\begin{align}
		\sum_{j=1}^b r_{ij}&=\sum_{i=1}^b \frac{n_i}{n_j} r_{ij}=k  \label{s1}\\
		\sum_{z=1}^{b}r_{iz}r_{zj}=&\delta_{ij}(t-\mu)+\mu n_j +(\lambda-\mu)r_{ij} \label{s2}
	\end{align}
	where  $0\leq r_{ij}\leq n_j$, $0\leq r_{ii}\leq n_i-1$ and $\sum_{i=1}^{b}n_i=v$,
	is called a {\bf row orbit matrix} for a directed strongly regular graph 
	with parameters $(v,k,t, \lambda, \mu)$ and the orbit lengths distribution $(n_1, \ldots,n_b)$.
\end{definition}

\begin{definition}\label{col}
	A $(b \times b)$-matrix $C = [c_{ij}]$ with entries satisfying conditions:
	\begin{align}
		\sum_{i=1}^b c_{ij}&=\sum_{j=1}^b \frac{n_j}{n_i} c_{ij}=k  \label{s3}\\
		\sum_{z=1}^{b}c_{iz}c_{zj} =&\delta_{ij}(t-\mu)+\mu n_i +(\lambda-\mu)c_{ij}\label{s4}
	\end{align}
	where  $0\leq c_{ij}\leq n_i$, $0\leq c_{ii}\leq n_i-1$ and $\sum_{i=1}^{b}n_i=v$, is called a {\bf column orbit matrix} 
	for a directed strongly regular graph 
	with parameters $(v,k,t, \lambda, \mu)$ and the orbit lengths distribution $(n_1, \ldots, n_b)$.
\end{definition}
\begin{remark}
When $k=t$ then a DSRG$(v,k,t, \lambda, \mu)$ is a SRG$(v,k, \lambda, \mu)$ and definitions \ref{col} and \ref{row} corresponds to definitions 3.1. and 3.2. from \cite{cm}.
\end{remark}

\section{A method of the construction of DSRGs} \label{con}

The table of feasible parameters of directed strongly regular graphs is available at \cite{bro1}.
In our construction we assume that a group $G$ is an automorphism group of a directed strongly regular graph $\Gamma=(V,E)$ with parameters $(v,k,t, \lambda, \mu)$ and partitions the vertex set $V$ of $\Gamma$ into \( b \) orbits, denoted by \( O_1, O_2, \ldots, O_b \), where \( n_i=|O_i| \) for \( i \in \{1, 2, \ldots, b \}\).

We start the construction by determining all possible orbit lengths distributions $(n_1,n_2,...n_b)$ for the action of a group $G$ on $\Gamma$, keeping in mind that \( n_i \) divides \( |G| \) for $ i \in \{1, 2, \ldots, b\} $ and that the sum of the sizes of all orbits must be equal to the size of the vertex set $V$, that is 
$\sum_{i=1}^b n_i=v.$
The next step is to find prototypes for rows and columns of column orbit matrices of directed strongly regular graphs. The concept of a prototype for a row of a column orbit matrix of a strongly regular graph with a presumed automorphism group of prime order was introduced in \cite{beh} and extended to composite order automorphisms acting on strongly regular graphs in \cite{cm}. Here we further generalize this concept and introduce prototypes for rows and columns of column orbit matrices of DSRGs.

\subsection{Prototypes for rows and columns of a column orbit matrix of a directed strongly regular graph}\label{sec-old}

A prototype for a row (column) of a column orbit matrix $C$ tells us the frequency of each integer that appears as an entry in a particular row (column) of $C$, i.e. it tells us how often each entry occurs without taking into account the order of the entries. Columns or rows of $C$ that correspond to orbits of size $1$ are said to be {\it fixed}, while all others are said to be {\it nonfixed}.

With $l_i$ , $i = 1, . . . , \rho$, we denote different elements of the multiset $\{n_1,n_2,...n_b\}$ in ascending order $l_1<l_2<\ldots<l_{\rho}$. Let $d_{l_i}$ be the number of orbits of length $l_i$, $i=1,\dots,\rho,$ for an action of $G$ on $\Gamma$.

\subsubsection{Prototypes for fixed rows and fixed columns}
Let $r$ be a fixed row (fixed column) corresponding to the orbit $O_r$ of size $n_r=1$. Then $l_1=1$.  

With $y_e^{(l_i)}$ we denote the number of occurrences of an element  $e\in \{0,m\}$ 
at the positions of row $r$ (column $r$) that correspond to the orbits of length $l_i,i=1,\dots,\rho$, where
$$m=\begin{cases}
		1, \quad r \textnormal{ is a fixed row}\\
		l_i, \quad r \textnormal{ is a fixed column}
	\end{cases}.$$ Note that from $c_{rr}=0$, we have $y_0^{(1)}\geq 1.$ Moreover,
\begin{equation}
	\label{dva+}
y_0^{(l_i)}+y_m^{(l_i)}=d_{l_i}\end{equation}
%and
 \begin{equation}\label{tri+}\sum_{i=1}^{\rho} l_i y_m^{(l_i)}=k,\end{equation}
 where $i=1,\dots,\rho$. 
 The vector $$(y_0^{(1)},y_m^{(1)};y_0^{(l_2)},y_m^{(l_2)};\dots;y_0^{(l_{\rho})},y_m^{(l_{\rho})})$$ whose components are nonnegative integer solutions of the equations (\ref{dva+}) and (\ref{tri+}) 
 %where $m=1$ ($m=l_i$)
is called a {\it prototype for a fixed row} ({\it prototype for a fixed column}).

\subsubsection{Prototypes for nonfixed rows and  nonfixed columns}

Let  $r$ be a nonfixed row (nonfixed column) corresponding  to the orbit $O_r$  of size $n_r\neq 1$.

With $y_e^{(l_i)}$ we denote the number of occurrences of an element  $e\in \{0,1,\dots,m\}$
at the positions of row $r$ (column $r$) corresponding to the orbits of length $l_i,i=1,\dots,\rho$, where
$$m=\begin{cases}
		n_r, \quad r \textnormal{ is a nonfixed row}\\
		l_i, \quad r \textnormal{ is a nonfixed column}
	\end{cases}.$$ 
For $e\frac{n_i}{n_r}\notin \{0,1,\dots, n_i \}$ ($e\frac{n_r}{n_i}\notin \{0,1,\dots, n_r \}$), we have $y_{e}^{(l_i)}=0.$ Note that if $l_1=1$, then  $y_e^{(1)}=0$ for $e\notin \{0,m\}$.

The following applies:
	\begin{equation}		\label{11a}
	\sum_{e=0}^{m}y_e^{(l_i)}=d_{l_i},  i=1,\dots,\rho.
\end{equation}
Since the sum of each row and column of the adjacency matrix is $k$, we also have
\begin{equation}
	\label{12a}
	\sum_{i=1}^{\rho}\sum_{h=1}^{m}y_h^{(l_i)}h\frac{{l_i}}{m}=k.\end{equation}
The vector $$(y_0^{(l_1)},\dots, y_{m}^{(l_1)};y_0^{(l_2)},\dots,y_{m}^{(l_2)};\dots;y_0^{(l_{\rho})},\dots,y_{m}^{(l_{\rho})}),$$ 
whose components are nonnegative integer solutions of the equations (\ref{11a}) and (\ref{12a}) 
is called a {\it prototype for a row corresponding to the orbit of length $n_r$} ({\it prototype for a column corresponding to the orbit of length $n_r$}). 

If $l_1=1,$ then  $y_e^{(1)}=0$ for $e\notin \{0,m\}$, and in this case we will be looking for the vector
 $$(y_{0}^{(1)},y_{m}^{(1)};y_0^{(l_2)},\dots,y_{m}^{(l_2)};\dots;y_0^{(l_{\rho})},\dots,y_{m}^{(l_{\rho})}).$$ 

\subsection{Construction of DSRGs}

Using the prototypes introduced in the previous section, we construct column orbit matrices for a group $G$ which is an automorphism group of a tentative directed strongly regular graph $\Gamma=(V,E)$ with parameters $(v,k,t, \lambda, \mu)$ that partitions
 the vertex set $V$ of $\Gamma$ into \( b \) orbits, denoted by \( O_1, O_2, \ldots, O_b \), where \( n_i=|O_i| \) for \( i \in \{1, 2, \ldots, b \}\). The construction procedure is similar to the construction of other incidence structures. However, when constructing other incidence structures, usually only prototypes of rows of column orbit matrices are used (see, for example, \cite{beh,cm,c-r-metrika}), and here we need to check the validity of the equation (\ref{s4}) at each construction step. Therefore, we alternately construct rows and columns of a column orbit matrix and check at each step whether the equation (\ref{s4}), that is
\begin{equation*}
\sum_{z=1}^{b}c_{iz}c_{zj}=
\delta_{ij}(t-\mu)+\mu n_i +(\lambda-\mu)c_{ij},\end{equation*}
is satisfied. We use the row and column prototypes introduced in the previous section to construct the rows and columns of a column orbit matrix. During construction, the number of candidate matrices for the column orbit matrix can be reduced by considering the orbits of equal length that have conjugate stabilizers to avoid the construction of isomorphic structures (see  \cite{cm,c-r-metrika}).

Several situations can occur in the construction: 1) there are no orbit matrices, 2) column orbit matrices are constructed, and 3) the situation is inconclusive because the construction could not be performed due to the large number of possibilities. In the last case, one can try to work with a larger group of composite order and perform an additional refinement step before extending the orbit matrices to adjacency matrices of DSRGs (see, for example, \cite{cm,c-r-metrika}).

The final step in the construction of DSRGs is a process called indexing, in which the obtained orbit matrices are extended to adjacency matrices of DSRGs. This refinement of the orbit matrices follows an approach that has already been successfully used in the construction of other incidence structures (see for example, \cite{cm,c-r-metrika}). When working with DSRGs, we again switch between rows and columns and check all necessary conditions for the adjacency matrix of a DSRG in each step. To reduce the computational burden, we sometimes approach to indexing partially constructed orbit matrices, which can be particularly useful in cases where the corresponding DSRG does not exist and trying to construct an orbit matrix is a tedious and not necessarily successful process. We will demonstrate this approach in the next section (see Subsection \ref{sub_aut7}). Note that not every orbit matrix necessarily leads to a directed strongly regular graph. Moreover, a given orbit matrix can correspond to several nonisomorphic directed strongly regular graphs.

\begin{remark}
If a DSRG is a strongly regular graph, that is if $k=t$, then due to $c_{zj}{n_j}=c_{jz}n_z$, the condition (\ref{s4}) becomes $\displaystyle \sum_{z=1}^{b}\frac{n_z}{n_j}c_{iz}c_{jz} =\delta_{ij}(k-\mu)+\mu n_i +(\lambda-\mu)c_{ij}$ and it is sufficient to work with row prototypes. In this case, our construction goes back to the construction of strongly regular graphs described in \cite{beh,cm}.
\end{remark}

\section{DSRGs$(22,9,6,3,4)$ with nontrivial automorphism}\label{22}

The smallest parameter set for which the existence of a directed strongly regular graph is unknown is $(22,9,6,3,4)$ (see \cite{bro1}). Here we use the method for constructing DSRGs described in the previous section to study DSRGs$(22,9,6,3,4)$ that admit a nontrivial automorphism. To this end, we consider the automorphisms of prime order that can act on such a graph. As a result, we prove the existence of a directed strongly regular graph with parameters $(22,9,6,3,4)$ and classify all such graphs with an automorphism of order three. Furthermore, we construct all DSRGs$(22,9,6,3,4)$ with an automorphism of order two that fixes six or eight vertices.

Let $\rho$ be an automorphism of prime order $p$ acting on a DSRG with parameters $(22,9,6,3,4)$ and denote by $f_p$ the number of fixed vertices for such an action. Since $\rho$ is of prime order, the possible orbit lengths are $l_1=1$ and $l_2=p$, and the orbit lengths distributions are determined by the number of orbits of length one $d_1=f_p$. In this section we will prove the following theorem.

\begin{theorem} \label{fp}
Let $\rho$ be an automorphism of prime order $p$ acting on a DSRG$(22,9,6,3,4)$ and denote by $f_p$ the number of fixed vertices for such an action.  Then $p \in \{2,3\}$ and the following holds:
\begin{enumerate}
 \item $f_2 \in \{0,6,8,10,12,14,16 \}$,
 \item $f_3=1$.
\end{enumerate}

\end{theorem}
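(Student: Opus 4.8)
The plan is to combine a cheap spectral/representation-theoretic argument, which bounds the admissible primes and pins down congruences and bounds on $f_p$, with the orbit-matrix construction of Section \ref{con}, which disposes of the remaining cases.

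First I would determine the spectrum of $A$. Rewriting (\ref{1}) with $(v,k,t,\lambda,\mu)=(22,9,6,3,4)$ gives $A^2+A-2I=4J$. On the all-ones vector $A$ acts as $k=9$, and since $JA=kJ$ by (\ref{2}) the subspace $\ker J$ is $A$-invariant; there $Jv=0$, so $A^2+A-2I=0$, i.e. $(A-I)(A+2I)=0$. Thus $A$ is diagonalizable with eigenvalues $9,1,-2$; writing $m_1,m_2$ for the multiplicities of $1,-2$, the relations $m_1+m_2=21$ and $\operatorname{tr}A=9+m_1-2m_2=0$ give $m_1=11$, $m_2=10$. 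As $1$ and $-2$ are rational, the eigenspaces $V_1=\ker(A-I)$ and $V_{-2}=\ker(A+2I)$ are defined over $\mathbb{Q}$, of dimensions $11$ and $10$.

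Next I would use that the permutation matrix $P$ of $\rho$ commutes with $A$ (the automorphism condition is $PAP^{-1}=A$), hence preserves $V_1$ and $V_{-2}$. Since $\rho$ has prime order $p$, every eigenvalue of $P$ is a $p$-th root of unity, and on a $\mathbb{Q}$-rational invariant subspace the primitive $p$-th roots occur in complete Galois orbits of size $p-1$ (as $\Phi_p$ is irreducible over $\mathbb{Q}$). Therefore $11=a_0+(p-1)a_1$ and $10=b_0+(p-1)b_1$ for nonnegative integers $a_0,a_1,b_0,b_1$, where $a_1$ (resp. $b_1$) counts the packets of primitive roots in $V_1$ (resp. $V_{-2}$). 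A nontrivial $\rho$ has an orbit of length $p$, so $a_1+b_1\ge 1$, whence $p-1\le 11$ and $p\in\{2,3,5,7,11\}$. Setting $c=a_1+b_1$ and evaluating $\operatorname{tr}P$ and $\operatorname{tr}(AP)$ over the three eigenspaces (using $\zeta+\zeta^2+\dots+\zeta^{p-1}=-1$ for a primitive $p$-th root $\zeta$) gives $f_p=\operatorname{tr}P=22-pc$, so $f_p\equiv 22\pmod p$, and $\operatorname{tr}(AP)=p(2b_1-a_1)$. Since $\operatorname{tr}(AP)=\#\{x:(x,\rho x)\in E\}$ lies between $0$ and the number $22-f_p=pc$ of nonfixed vertices, we get $\tfrac12 b_1\le a_1\le 2b_1$; in particular no single-orbit action ($c=1$) is possible, ruling out $f_2=20$, $f_3=19$, $f_5=17$, $f_7=15$, $f_{11}=11$. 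After this pruning the surviving candidates are $p=2$ with $f_2\in\{0,2,4,6,8,10,12,14,16,18\}$, $p=3$ with $f_3\in\{1,4,7,10,13,16\}$, $p=5$ with $f_5\in\{2,7,12\}$, $p=7$ with $f_7=8$, and $p=11$ with $f_{11}=0$.

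Finally I would eliminate every candidate with $p\in\{5,7,11\}$ and the leftover values $f_2\in\{2,4,18\}$, $f_3\in\{4,7,10,13,16\}$ by the construction of Section \ref{con}: for the orbit-length distribution consisting of $f_p$ fixed orbits and $c$ orbits of length $p$, one forms the fixed and nonfixed prototypes from (\ref{dva+})--(\ref{12a}) and tries to assemble a column orbit matrix obeying (\ref{s3})--(\ref{s4}). Any DSRG$(22,9,6,3,4)$ admitting $\rho$ would produce such a matrix, so exhibiting that none exists excludes the case, leaving exactly $p\in\{2,3\}$ with $f_2\in\{0,6,8,10,12,14,16\}$ and $f_3=1$. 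I expect this last step to be the main obstacle: the number of prototype combinations and partially built matrices is large, so certifying non-existence depends on the alternating row/column construction and on the partial-indexing shortcut of Section \ref{con} (illustrated for order $7$ in Subsection \ref{sub_aut7}) to keep the search feasible.
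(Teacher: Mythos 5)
Your spectral front-end is correct and is a genuinely different route from the paper for a nontrivial part of the theorem. The paper never computes the spectrum: it disposes of $p\in\{13,17,19\}$ by showing no prototype for a fixed row exists (Lemma \ref{aut13}), and it kills each single-nontrivial-orbit case ($f_2=20$, $f_3=19$, $f_5=17$, $f_7=15$, $f_{11}=11$) separately, mostly by attempting to index a nonfixed row/column pair. Your argument --- eigenvalues $9$, $1$, $-2$ with multiplicities $1,11,10$, rationality of the eigenspaces forcing $11=a_0+(p-1)a_1$ and $10=b_0+(p-1)b_1$, and the two trace identities $\operatorname{tr}P=22-p(a_1+b_1)$ and $\operatorname{tr}(AP)=p(2b_1-a_1)\in[0,\,p(a_1+b_1)]$ --- handles all of these uniformly by hand, and I have checked that the computations are sound; it also eliminates $f_7=1$, which the paper does by a prototype count. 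For everything else ($f_2\in\{2,4,18\}$, $f_3\in\{4,7,10,13,16\}$, $f_5\in\{2,7,12\}$, $f_7=8$, $f_{11}=0$) your plan reverts to exactly the paper's method: prototypes, alternating row/column assembly of column orbit matrices subject to (\ref{s3})--(\ref{s4}), and (partial) indexing. So the two proofs share the same computational core; what your approach buys is a clean a priori bound $p\le 11$ and the hand elimination of every $d_p=1$ case, while the paper stays self-contained within its orbit-matrix machinery. Like the paper, your argument ultimately rests on an unexecuted computer search for the remaining cases, which you correctly identify as the real burden.

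One concrete warning about those remaining cases: you write that one ``tries to assemble a column orbit matrix obeying (\ref{s3})--(\ref{s4})'' and that exhibiting that none exists excludes the case. For $p=11$, $f_{11}=0$ that exit is not available: the matrix
\begin{equation*}
\begin{pmatrix}5&4\\4&5\end{pmatrix}
\end{equation*}
satisfies both (\ref{s3}) and (\ref{s4}) (for instance $5^2+4^2=41=(t-\mu)+\mu\cdot 11+(\lambda-\mu)\cdot 5$), so the exclusion there must come from the indexing stage, not from the nonexistence of orbit matrices. (Incidentally, the paper's Lemma \ref{aut11} lists only the candidates with first row $(a,9-a)$ for $a\le 4$ and thus overlooks this matrix, so its written case analysis for $f_{11}=0$ is incomplete as it stands.) The same phenomenon occurs in several other surviving cases, where orbit matrices exist and fail only at indexing; your proposal accommodates this because you also invoke the indexing step, but you should not expect ``no orbit matrix'' to be the terminating condition in general.
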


To prove Theorem \ref{fp}, we will consider all possible orbit lengths distributions for an automorphism of prime order $p$ acting on a DSRG$(22,9,6,3,4)$. We have to consider the cases that satisfy $2\leq p\leq 19$, $d_1\leq 21$ and $f_p+d_p\cdot p =22$.

 \subsection{Automorphisms of order 13, 17 and 19}
 
Let $\rho$ be an automorphism of prime order $p \in \{13,17,19 \}$ acting on a DSRG$(22,9,6,3,4)$ and denote by $f_p$ the number of fixed vertices for such an action. Then $f_{13}=9$, $f_{17}=5$ and $f_{19}=3$. In all these cases $d_p=1$. 
 
\begin{lemma} \label{aut13}
Let $\rho$ be an automorphism of order $p$ acting on a DSRG$(22,9,6,3,4)$. Then $p \notin \{13,17,19 \}$. 
\end{lemma}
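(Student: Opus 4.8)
The plan is to exploit the fact that for each of these primes the automorphism $\rho$ has a single nontrivial orbit. Since $f_p+d_p\cdot p=22$ with $p\ge 13$, we must have $d_p=1$, so $\rho$ consists of $f_p$ fixed points together with exactly one orbit $O_r$ of length $p$, where $f_p=9,5,3$ for $p=13,17,19$ respectively; in particular $f_p\le 9<10$ in every case. The heart of the argument is to show that no edge can join a fixed vertex to the long orbit $O_r$, so that the $f_p$ fixed vertices would be forced to support all of their out-edges among themselves, which is impossible because each needs out-degree $k=9$.

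To make this precise I would work with the orbit matrices $R=[r_{ij}]$ and $C=[c_{ij}]$ and the compatibility relation (\ref{3}), namely $r_{ij}n_i=c_{ij}n_j$. Let $O_i$ be any fixed orbit ($n_i=1$) and $O_r$ the long orbit ($n_r=p$). Applying (\ref{3}) to the pair $(i,r)$ gives $r_{ir}=p\,c_{ir}$; since $c_{ir}$ is a nonnegative integer and $0\le r_{ir}\le k=9<p$, this forces $c_{ir}=r_{ir}=0$. Applying (\ref{3}) to the pair $(r,i)$ gives $c_{ri}=p\,r_{ri}$, while the column-sum condition (\ref{s3}) bounds $c_{ri}\le k=9<p$, so again $r_{ri}=c_{ri}=0$. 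Thus every orbit-matrix entry linking a fixed orbit with $O_r$ vanishes in both directions.

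It then remains only to count. For a fixed row $i$, the row-sum condition (\ref{s1}) reads $\sum_{j=1}^{b} r_{ij}=9$. The entry for the long orbit is $0$ by the previous step, the diagonal entry satisfies $r_{ii}=0$ (no loops, so $r_{ii}\le n_i-1=0$), and every remaining entry $r_{ij}$ with $O_j$ fixed is the single $(0,1)$-value $a_{ij}$, hence at most $1$. There are only $f_p-1$ such positions, so $9=\sum_{j} r_{ij}\le f_p-1$, giving $f_p\ge 10$. This contradicts $f_p\le 9$ for each $p\in\{13,17,19\}$, and the lemma follows.

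The only genuinely delicate point is the divisibility step: one must be sure that an out- or in-neighbourhood of a fixed vertex meeting $O_r$ is $\rho$-invariant and hence, by transitivity of $\rho$ on $O_r$, is either empty or all of $O_r$ — equivalently, that (\ref{3}) forces the cross-entries $r_{ir}$ and $c_{ri}$ to be multiples of $p$. Once the inequality $p>k$ is used to exclude the full orbit, the remaining counting is entirely routine, and no information about the parameters $(t,\lambda,\mu)$ beyond $k=9<p$ is required.
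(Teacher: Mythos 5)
Your proof is correct and is essentially the paper's argument in different clothing: both show that the out-neighbourhood of a fixed vertex is a union of orbits which cannot meet the unique orbit of length $p>k=9$, and hence must fit inside the at most $f_p-1\le 8$ remaining fixed vertices, contradicting out-degree $9$. The paper packages this as the nonexistence of a nonnegative integer solution to the prototype equations (\ref{dva+}) and (\ref{tri+}) for a fixed row, whereas you derive the vanishing of the cross-entries directly from relation (\ref{3}); the content is the same.
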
 
 
\begin{proof}
We are looking for the prototypes for fixed rows of the column orbit matrices in the cases $p \in \{13,17,19 \}$. To do this, we need to find the nonnegative integer solutions of the equation (\ref{tri+}).\\
For $p=13$, the only nonnegative integer solution of $y_1^{(1)}+13y_1^{(13)}=9$ is given by $y_1^{(1)}$=9, $y_1^{(13)}$=0. Since $y_0^{(1)}\geq 1$ and $y_0^{(1)}+y_1^{(1)}=9$, we also have $y_1^{(1)}<9$. In this case, there is no prototype for a fixed row.\\
For $p=17$ and $p=19$ there are no nonnegative integer solutions for $y_1^{(1)}+17y_1^{(17)}=9$, $y_0^{(1)}+y_1^{(1)}=5$
and $y_1^{(1)}+19y_1^{(19)}=9$, $y_0^{(1)}+y_1^{(1)}=3$, respectively.
\end{proof}

 \subsection{An action of an automorphism of order 11}

 \begin{lemma} \label{aut11}
A DSRG$(22,9,6,3,4)$ with an automorphism of order $11$ does not exist. 
\end{lemma}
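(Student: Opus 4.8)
The plan is to run through all orbit length distributions compatible with an order-$11$ automorphism and eliminate each one. Since the only possible orbit lengths are $1$ and $11$ and $f_{11}+11\,d_{11}=22$, we have $f_{11}=11(2-d_{11})$, so $f_{11}\equiv 0\pmod{11}$; as $\rho\neq\mathrm{id}$ excludes $d_{11}=0$, the admissible values are $f_{11}\in\{0,11\}$. I would treat these two cases separately.

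For $f_{11}=11$ I would argue by a short connectivity obstruction rather than by prototypes, since a fixed-row prototype does exist here (namely $y_1^{(1)}=9$, $y_1^{(11)}=0$, which solves (\ref{tri+})), so (\ref{tri+}) alone is not decisive. Let $F$ be the set of $11$ fixed vertices and $O$ the single orbit of length $11$. Because $\rho$ fixes each $x\in F$ and permutes $O$ transitively, for $y\in O$ the edge $(x,y)$ is present for every $y\in O$ simultaneously or for none; the first option would give $x$ out-degree at least $11>k=9$, so there are no edges from $F$ to $O$, and the in-degree version gives no edges from $O$ to $F$. Thus $F$ and $O$ are mutually nonadjacent. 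Taking $x\in F$ and $y\in O$ we have $(x,y)\notin E$, yet every directed path $x\to z\to y$ would need $z$ to be an out-neighbour of $x$ (hence $z\in F$) and an in-neighbour of $y$ (hence $z\in O$), which is impossible. The number of such paths is therefore $0\neq\mu=4$, a contradiction, so $f_{11}=11$ cannot occur.

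For $f_{11}=0$ there are two orbits of length $11$ and $G$ acts regularly on each, so $n_1=n_2=11$. Here I would first pin down the column orbit matrix. Writing $C=\begin{pmatrix}c_{11}&c_{12}\\ c_{21}&c_{22}\end{pmatrix}$, the degree conditions (\ref{s3}) force $c_{12}=c_{21}=9-c_{11}$ and $c_{22}=c_{11}$, and substituting into (\ref{s4}) yields $2c_{11}^{2}-17c_{11}+35=0$, whose only admissible integer root is $c_{11}=5$. Thus the unique candidate is $C=\begin{pmatrix}5&4\\4&5\end{pmatrix}$, and by (\ref{3}) the row orbit matrix coincides with it. The remaining task is the indexing step. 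Since $G$ is regular on each orbit, each block $A_{ij}$ is a $\ZZ_{11}$-circulant determined by a connection set $D_{ij}\subseteq\ZZ_{11}$ with $|D_{11}|=|D_{22}|=5$ (and $0\notin D_{11},D_{22}$), $|D_{12}|=|D_{21}|=4$, and (\ref{1}) becomes the system of group-ring identities $\sum_{k}\widehat{D}_{ik}\widehat{D}_{kj}=-\widehat{D}_{ij}+2\delta_{ij}\widehat{e}+4\widehat{G}$ in $\ZZ[\ZZ_{11}]$, where $\widehat{D}=\sum_{d\in D}x^{d}$, $\widehat{e}=x^{0}$ and $\widehat{G}=\sum_{g}x^{g}$.

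I expect this last step to be the main obstacle, because the orbit matrix $C$ already satisfies every linear and quadratic constraint and its block-character matrices can all be assigned eigenvalues in $\{1,-2\}$ consistent with the spectrum of a DSRG$(22,9,6,3,4)$; hence the nonexistence is invisible at the level of orbit matrices and eigenvalue multiplicities and must be settled at the level of the actual connection sets. Concretely I would evaluate the system at the nontrivial characters of $\ZZ_{11}$ to extract pruning relations: the off-diagonal identities give $\widehat{D}_{12}\,(\widehat{D}_{11}+\widehat{D}_{22}+\widehat{e})=\widehat{D}_{21}\,(\widehat{D}_{11}+\widehat{D}_{22}+\widehat{e})=4\widehat{G}$, which forces $\widehat{D}_{12}(\chi)=\widehat{D}_{21}(\chi)=0$ at every nontrivial $\chi$ with $\widehat{D}_{11}(\chi)+\widehat{D}_{22}(\chi)\neq -1$, and the diagonal identities then constrain $\widehat{D}_{12}\widehat{D}_{21}$ in terms of $D_{11}$. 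I would then run the finite indexing search over the $5$- and $4$-subsets of $\ZZ_{11}$, reducing by the normaliser of $G$ to avoid isomorphic copies, and anticipate that it returns no adjacency matrix; together with the first case this completes the proof that $p=11$ is impossible.
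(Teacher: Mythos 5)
Your reduction to $f_{11}\in\{0,11\}$ is correct, and your argument for $f_{11}=11$ is complete and sound: the transitivity of $\langle\rho\rangle$ on the length-$11$ orbit forces the fixed set $F$ and the orbit $O$ to be mutually nonadjacent, and then the $\mu$-count for a pair $x\in F$, $y\in O$ gives $0\neq\mu=4$. This is more elementary than the paper's treatment, which instead notes that the unique fixed-row and fixed-column prototype is $(2,9,1,0)$ and that the inner product of the first row and first column of the column orbit matrix (which must equal $t=6$ by (\ref{s4}) with $i=j$) is forced to be at least $8$, since two $9$-subsets of the remaining ten fixed positions meet in at least eight places. Both routes work.

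The gap is in the case $f_{11}=0$, which you do not actually close. Your orbit-matrix analysis there is correct — indeed it is sharper than the paper's written proof, which lists only the five matrices with first row $(a,9-a)$ for $a\in\{0,\dots,4\}$ and thereby omits the other ordering of the prototype $\{4,5\}$: as you compute, $C=\bigl(\begin{smallmatrix}5&4\\4&5\end{smallmatrix}\bigr)$ satisfies (\ref{s3}) and (\ref{s4}) (for $i=j$: $25+16=41=2+44-5$), so this case genuinely survives to the indexing stage and cannot be dismissed at the level of orbit matrices. But having shown that, your argument ends with ``run the finite indexing search \dots and anticipate that it returns no adjacency matrix.'' An anticipated outcome of a search is not a proof; until that search over the $5$- and $4$-subsets of $\ZZ_{11}$ is actually executed (or the character-sum relations you set up are pushed to a concrete contradiction, e.g.\ by analyzing at which nontrivial characters $\widehat{D}_{11}(\chi)+\widehat{D}_{22}(\chi)=-1$ can hold and comparing with $|D_{12}|=|D_{21}|=4$), the lemma is established only for $f_{11}=11$.
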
 

\begin{proof}
We have to consider two cases: $f_{11}=0$ and $f_{11}=11$.

If $f_{11}=0$, we have only prototypes for the rows and columns corresponding to the orbits of length $11$. Those are the solutions of 
 \begin{gather*}
 \sum_{h=0}^{11}y_h^{(11)}=2\\
\sum_{h=1}^{11}y_h^{(11)}h=9\\
 \end{gather*}
We get that a prototype $(y_0^{(11)},y_1^{(11)},y_2^{(11)},y_3^{(11)},y_4^{(11)},y_5^{(11)},y_6^{(11)},y_7^{(11)},y_8^{(11)},y_9^{(11)},y_{10}^{(11)},y_{11}^{(11)})$ must be from the set  $\{(1,0,0,0,0,0,0,0,0,1,0,0), (0,1,0,0,0,0,0,0,1,0,0,0),  (0,0,1,0,0, 0,0,1,0,0,\\0,0),(0,0,0,1,0,0,1,0,0,0,0,0), (0,0,0,0,1,1,0,0,0,0,0,0)\}$. 
These possibilities for the first row of a column orbit matrix determine the rest of the matrix and we get the following cases (in the first row and the first column we give the length of the corresponding orbit).
  \begin{center} \footnotesize
 	\begin{table}[htpb!] 
\begin{minipage}{.195\textwidth}
 		\[ { \begin{tabular}{c||cc}
 				&11&11\\
 				\hline
 				\hline
 				11&0&9\\
				11&9&0
 			
 		\end{tabular}}\] 
\end{minipage}
\begin{minipage}{.195\textwidth}
 		\[ { \begin{tabular}{c||cc}
 				&11&11\\
 				\hline
 				\hline
 				11&1&8\\
 				11&8&1\\
 		\end{tabular}}\] 
\end{minipage}
\begin{minipage}{.195\textwidth}
		\[ { \begin{tabular}{c||cc}
 				&11&11\\
 				\hline
 				\hline
 				11&2&7\\
 				11&7&2\\
 		\end{tabular}}\] 
\end{minipage}
\begin{minipage}{.195\textwidth}
		\[ { \begin{tabular}{c||cc}
 				&11&11\\
 				\hline
 				\hline
 				11&3&6\\
 				11&6&3\\
 		\end{tabular}}\] 
\end{minipage}
\begin{minipage}{.195\textwidth}
		\[ { \begin{tabular}{c||cc} 
 				&11&11\\
 				\hline
 				\hline
 				11&4&5\\
 				11&5&4\\
 		\end{tabular}}\] 
\end{minipage}
 	\end{table}
 \end{center} 
None of these matrices satisfies the condition (\ref{s4}) and is not the column orbit matrix of a DSRG$(22,9,6,3,4)$.

Let us now consider the case $f_{11}=11$. The only solution of the equations
 \begin{gather*}
 y_0^{(1)}+y_1^{(1)}=11\\
 y_0^{(11)}+y_1^{(11)}=1\\
 y_1^{(1)}+11y_1^{(11)}=9
 \end{gather*}
 is $(y_0^{(1)},y_1^{(1)};y_0^{(11)},y_1^{(11)})=(2,9,1,0).$
Without loss of generality, we obtain that the first fixed row of a column orbit matrix is given by
  \begin{center} \footnotesize
 	\begin{table}[htpb!] 
 		\[ { \begin{tabular}{c||ccccc ccccc cc}
 				$OM$&1&1&1&1&1&1&1&1&1&1&1&11\\
 				\hline
 				\hline
 				1&0&0&1&1&1&1&1&1&1&1&1&0
 		\end{tabular}}\] 
 	\end{table}
 \end{center} 
For the prototype for a fixed column is also
 $(y_0^{(1)},y_1^{(1)};y_0^{(11)},y_{11}^{(11)})=(2,9,1,0)$, it is not possible to construct the first column such that the product of the first row and the first column is equal to $t=6$. Therefore, (\ref{s4}) is not fulfilled and the statement of the lemma applies.
 \end{proof}

 \subsection{An action of an automorphism of order 7} \label{sub_aut7}
   
 \begin{lemma} \label{aut7}
A DSRG$(22,9,6,3,4)$ with an automorphism of order $7$ does not exist. 
\end{lemma}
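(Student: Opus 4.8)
The plan is to follow the pattern of Lemmas~\ref{aut13} and~\ref{aut11}. Since $\rho$ has order $7$ and $f_7+7d_7=22$ with $d_7\geq 1$, the admissible orbit length distributions are $(1,7^3)$, $(1^8,7^2)$ and $(1^{15},7)$, i.e. $f_7\in\{1,8,15\}$. I would first rule out $f_7=1$ by the fixed-row prototype argument used for $p=13$: here $d_1=1$, so $y_0^{(1)}+y_1^{(1)}=1$, and because $c_{rr}=0$ forces $y_0^{(1)}\geq 1$ we get $y_1^{(1)}=0$; then~(\ref{tri+}) becomes $7y_1^{(7)}=9$, which has no nonnegative integer solution. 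Hence no prototype for a fixed row exists and $f_7=1$ is impossible. The same two survivors can also be isolated by a spectral count: the feasible spectrum of $A$ is $9,1,-2$ with multiplicities $1,11,10$, and since the permutation matrix of $\rho$ commutes with $A$, preserves each rational eigenspace and has order $7$, its trace satisfies $f_7=22-7(a+b)$ with $6a\leq 11$ and $6b\leq 10$, so $f_7\in\{8,15\}$.

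For the two remaining distributions I would run the construction of Section~\ref{con} in full. The first step is to solve~(\ref{dva+})--(\ref{tri+}) for all fixed-row and fixed-column prototypes and~(\ref{11a})--(\ref{12a}) for the prototypes attached to the orbit(s) of length $7$; the second is to assemble candidate column orbit matrices by alternately laying down rows and columns drawn from these prototypes, testing~(\ref{s4}) at every step and using the fact that orbits of equal length with conjugate stabilizers may be treated together to avoid isomorphic copies. The aim is to show that for $f_7=8$ the construction terminates without yielding any column orbit matrix that extends to an adjacency matrix of a DSRG, mirroring the way the five candidate matrices were eliminated in Lemma~\ref{aut11}.

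I expect the genuinely hard case to be $f_7=15$, where the column orbit matrix is $16\times 16$ and the number of partial completions is large enough that a naive exhaustion is impractical. This is where I would deploy partial indexing, the technique advertised in Section~\ref{con}: instead of first completing an orbit matrix and only afterwards lifting it, I would index a partially built orbit matrix and impose the defining identities~(\ref{1})--(\ref{2}) for the adjacency matrix early, pruning any branch that cannot be extended to a genuine DSRG. The main obstacle is therefore computational rather than conceptual---controlling this search so that every branch is eliminated---and a successful pruning establishes that no DSRG$(22,9,6,3,4)$ admits an automorphism of order $7$.
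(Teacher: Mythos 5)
Your proposal follows essentially the same route as the paper: the case split $f_7\in\{1,8,15\}$, elimination of $f_7=1$ by the non-existence of a fixed-row prototype, and computational elimination of $f_7=8$ and $f_7=15$ via orbit-matrix construction and partial indexing (the paper disposes of $f_7=8$ by observing that the unique fixed-row and fixed-column prototypes cannot yield inner product $t=6$, and of $f_7=15$ by indexing the unique admissible nonfixed row/column pair). Your spectral multiplicity argument is a nice independent confirmation that $f_7\in\{8,15\}$, but it is supplementary and does not alter the approach.
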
 

\begin{proof}
We have three cases to consider: $f_7=1, f_7=8$ and $f_7=15$.

For $f_7=1$ there are no nonnegative integer solutions satisfying $y_1^{(1)}+7y_1^{(7)}=9$ and $y_1^{(1)}=0$. Therefore, the prototypes for the fixed row and the fixed column do not exist.

For $f_7=8$, the only prototype for a fixed row is $(y_0^{(1)},y_1^{(1)};y_0^{(7)},y_1^{(7)})=(6,2,1,1)$ and we can assume that the first row of an orbit matrix is given as follows.
 
  \begin{center} \footnotesize
 	\begin{table}[htpb!] 
 		\[ { \begin{tabular}{c||ccccc ccccc }
 				$OM$&1&1&1&1&1&1&1&1&7&7\\
 				\hline
 				\hline
 				1&0&0&0&0&0&0&1&1&0&1
 		\end{tabular}}\] 
 	\end{table}
 \end{center} 
Since the only prototype for the fixed column is also
 $(y_0^{(1)},y_1^{(1)};y_0^{(7)},y_7^{(7)})=(6,2,1,1)$, it is not possible to construct the first column of a column orbit matrix such that the product of the first row and the first column equals $t=6.$

Finally, let us consider the case $f_7=15$. There is only one orbit of length $7$ and we start our construction of a column orbit matrix with the nonfixed row and the nonfixed column. The prototypes for the nonfixed row and the nonfixed column belong to the set $\{(13, 2, 0, 0, 0, 0, 0, 0, 0, 1), (12, 3, 0, 0, 0, 0, 0, 0, 1, 0 ), (11, 4, 0, 0, 0, 0, 0, 1, 0, 0 ), ( 10, 5,0,0, 0, 0, 1, 0,\\ 0, 0), ( 9, 6, 0, 0, 0, 1, 0, 0, 0,
0 ), ( 8, 7, 0, 0, 1, 0, 0, 0, 0, 0), ( 7, 8, 0, 1, 0, 0, 0, 0, 0, 0), ( 6, 9, 1, 0, 0, 0, 0, 0,\\ 0, 0 )\}$. However, the only pair of prototypes for the nonfixed row and the nonfixed column that satisfies the equation (\ref{s4})  
\begin{equation*}
\sum_{z=1}^{b}c_{iz}c_{zj}=
\delta_{ij}(t-\mu)+\mu n_i +(\lambda-\mu)c_{ij},\end{equation*}
is the pair consisting of 
 \begin{center} \footnotesize
 	\begin{table}[htpb!] 
 		\[ { \begin{tabular}{c||cccccccccccccccc }
		$OM$&1&1&1&1&1&1&1&1&1&1&1&1&1&1&1&7\\
\hline	
\hline
7&0&0&0&0&0&0&0&7&7&7&7&7&7&7&7&1
 		\end{tabular}}\] 
 	\end{table}
 \end{center} 
 as the nonfixed row, and  
  \begin{center} \footnotesize
 	\begin{table}[htpb!] 
 		%\caption{\label{om7} }
 		\[ { \begin{tabular}{c||cccccccccccccccc }
		$OM^T$&1&1&1&1&1&1&1&1&1&1&1&1&1&1&1&7\\
\hline	
\hline
7&0&0&0&1&1&1&1&0&0&0&0&1&1&1&1&1\\
 		\end{tabular}}\] 
 	\end{table}
 \end{center} 
as the nonfixed column of a column orbit matrix.
 
Now we apply indexing to this partial orbit matrix, which consists of a nonfixed row and a nonfixed column. The element $c_{ij}={\bm a}$ of the orbit matrix is transformed in the following way, depending on whether it is in a fixed or nonfixed row and a fixed or nonfixed column:
 \begin{center}
 \begin{tabular}{|c|c|}\cline{2-2}
\multicolumn{1}{c|}{} &$n_j=1$\\ \hline
\multirow{1}{*}{$n_i=1$} &${\bm a}$ \\ \hline
\end{tabular} \hspace{2 cm}
 \begin{tabular}{|c|c|c|c|c|c|c|c|}\cline{2-8}
\multicolumn{1}{c|}{} &\multicolumn{7}{c|}{$n_j=7$}\\ \hline
\multirow{1}{*}{$n_i=1$} &${\bm a}$&${\bm a}$&${\bm a}$&${\bm a}$&${\bm a }$&${\bm a}$&${\bm a}$ \\ \hline
\end{tabular}
\vspace{1cm}

 \begin{tabular}{|c|c|}\cline{2-2}
\multicolumn{1}{c|}{} &$n_j=1$\\ \hline
\multirow{7}{*}{$n_i=7$} &${\bm a/n_i}$ \\ \cline{2-2}
 &${\bm a/n_i}$ \\ \cline{2-2}
 &${\bm a/n_i}$ \\ \cline{2-2}
 &${\bm a/n_i}$ \\ \cline{2-2}
 &${\bm a/n_i}$ \\ \cline{2-2}
 &${\bm a/n_i}$ \\ \cline{2-2}
 &${\bm a/n_i}$ \\ \hline
\end{tabular} \hspace{2 cm}
\begin{tabular}{|c|c|c|c|c|c|c|c|}\cline{2-8}
\multicolumn{1}{c|}{}&\multicolumn{7}{c|}{$n_j=7$}\\ \hline
\multirow{7}{*}{$n_i=7$} &${\bm a_1}$&${\bm a_2}$&${\bm a_3}$&${\bm a_4}$&${\bm a_5}$&${\bm a_6}$&${\bm a_7}$ \\ \cline{2-8}
&${\bm a_7}$&${\bm a_1}$&${\bm a_2}$&${\bm a_3}$&${\bm a_4}$&${\bm a_5}$&${\bm a_6}$ \\ \cline{2-8}
&${\bm a_6}$&${\bm a_7}$&${\bm a_1}$&${\bm a_2}$&${\bm a_3}$&${\bm a_4}$&${\bm a_5}$ \\ \cline{2-8}
&${\bm a_5}$&${\bm a_6}$&${\bm a_7}$&${\bm a_1}$&${\bm a_2}$&${\bm a_3}$&${\bm a_4}$ \\ \cline{2-8}
&${\bm a_4}$&${\bm a_5}$&${\bm a_6}$&${\bm a_7}$&${\bm a_1}$&${\bm a_2}$&${\bm a_3}$ \\ \cline{2-8}
&${\bm a_3}$&${\bm a_4}$&${\bm a_5}$&${\bm a_6}$&${\bm a_7}$&${\bm a_1}$&${\bm a_2}$ \\ \cline{2-8}
&${\bm a_2}$&${\bm a_3}$&${\bm a_4}$&${\bm a_5}$&${\bm a_6}$&${\bm a_7}$&${\bm a_1}$ \\ \hline
\multicolumn{1}{c}{}&\multicolumn{7}{c}{$\sum_{i=1}^{7} a_i=a$, $a_i\in\{0,1\}$} \\
\end{tabular}

\end{center}

In this way, a nonfixed row and a nonfixed column of an orbit matrix expand to seven rows and seven columns of the adjacency matrix of a DSRG$(22,9,6,3,4)$. Since an adjacency matrix of a DSRG can be considered as an orbit matrix for an action of the trivial automorphism group, these rows and columns should satisfy (\ref{s4}). Since this is not the case, we can conclude that there is no DSRG$(22,9,6,3,4)$ with an automorphism of order $7$.
\end{proof}

\subsection{An action of an automorphism of order 5} 
For an automorphism of order $5$ acting on a DSRG$(22,9,6,3,4)$, we have $f_5 \in \{2,7,12,17 \}$.

 \begin{lemma} \label{aut5}
A DSRG$(22,9,6,3,4)$ with an automorphism of order $5$ does not exist. 
\end{lemma}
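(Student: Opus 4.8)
The plan is to run through the four feasible values $f_5\in\{2,7,12,17\}$ forced by $f_5+5d_5=22$ with $d_5\ge 1$, ruling out each one. Throughout I use for $(22,9,6,3,4)$ the data $t-\mu=2$, $\mu=4$ and $\lambda-\mu=-1$.

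Two cases are immediate from the prototype analysis of Subsection~\ref{sec-old}. For $f_5=2$ a fixed row must satisfy (\ref{tri+}), i.e.\ $y_1^{(1)}+5y_1^{(5)}=9$, together with $y_0^{(1)}+y_1^{(1)}=2$ and $y_0^{(1)}\ge 1$; the latter forces $y_1^{(1)}\le 1$, so $5y_1^{(5)}\in\{8,9\}$, which is impossible, and no column orbit matrix exists. For $f_5=7$ the same equations have the single solution $(y_0^{(1)},y_1^{(1)};y_0^{(5)},y_1^{(5)})=(3,4;2,1)$, and likewise for a fixed column. I would then invoke the diagonal instance of (\ref{s4}) at the fixed orbit $O_1$: since $c_{11}=0$ it reads $\sum_z c_{1z}c_{z1}=(t-\mu)+\mu=6=t$. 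The fixed part of this inner product is a dot product of two $0/1$ vectors of length $6$ each with four $1$'s, hence lies in $\{2,3,4\}$, while the length-$5$ part contributes $0$ or $5$ (one row-entry $1$ meeting one column-entry $5$); the total lies in $\{2,3,4,7,8,9\}$ and can never equal $6$, exactly as in the treatment of $f_7=8$ in Lemma~\ref{aut7}.

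For $f_5=17$ I would switch to a spectral argument. Equations (\ref{1}) and (\ref{2}) give $A^2+A-2I=4J$, so on $\mathbf 1^{\perp}$ the matrix $A$ satisfies $(A-I)(A+2I)=0$; with $\operatorname{tr}A=0$ this yields eigenvalues $9,1,-2$ of multiplicities $1,11,10$. The orbit partition is equitable, so the row orbit matrix $R$ is the corresponding quotient matrix and its spectrum is contained in $\{9,1,-2\}$, with $9$ simple (the all-ones vector). Writing $a_1,a_2$ for the multiplicities of $1,-2$ in $R$, these are the dimensions of the $\langle\rho\rangle$-fixed subspaces of the respective eigenspaces of $A$; since over $\mathbb{Q}$ a group of order $5$ splits each space into trivial and $4$-dimensional ($\Phi_5$) constituents, $a_1=11-4b_1$ and $a_2=10-4b_2$ with $b_1+b_2=d_5$. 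Hence $\operatorname{tr}R=9+a_1-2a_2=8d_5-12b_1$. On the other hand $\operatorname{tr}R=\sum_i r_{ii}$ with $r_{ii}=0$ on fixed orbits and $0\le r_{ii}\le 4$ on size-$5$ orbits, so $0\le\operatorname{tr}R\le 4d_5$. For $f_5=17$ we have $d_5=1$, and $8-12b_1\in[0,4]$ has no integer solution ($b_1\in\{0,1\}$ give $8$ and $-4$); this rules out $f_5=17$.

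The remaining and hardest case is $f_5=12$, where $d_5=2$ and the spectral bound only narrows the possibilities to $b_1=b_2=1$, $\operatorname{tr}R=4$, without producing a contradiction by itself. Here I would carry out the full construction of Section~\ref{con}: build column orbit matrices for the orbit length distribution $1^{12}5^{2}$ by alternately placing rows and columns drawn from the fixed and nonfixed prototypes, enforcing (\ref{s4}) at every step, and then index any survivor to a genuine adjacency matrix, checking (\ref{s4}) on the blown-up blocks as in Lemma~\ref{aut7}. The expectation is that no column orbit matrix extends to a DSRG$(22,9,6,3,4)$. This enumeration is the main obstacle: the fixed block is a $12\times 12$ zero-diagonal $0/1$ matrix with all row and column sums in $\{4,9\}$, subject to the congruence $B^2\equiv 4B+2I+4J\pmod 5$ coming from (\ref{s4}) (the size-$5$ contributions being multiples of $5$), and I do not see a short closed-form contradiction. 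I would therefore finish this case by the computer search (GAP) already used elsewhere in the paper, concluding that no such graph exists and hence that no automorphism of order $5$ can act on a DSRG$(22,9,6,3,4)$.
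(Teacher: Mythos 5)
Your proposal is correct and follows the paper's route for three of the four cases, but diverges in one. For $f_5=2$ you rule out prototypes exactly as the paper does, and for $f_5=7$ you make explicit the paper's terse ``easy to verify'' step by evaluating the diagonal instance of (\ref{s4}) at the fixed orbit: the fixed contribution lies in $\{2,3,4\}$ and the length-$5$ contribution in $\{0,5\}$, so the required value $6$ is unattainable --- this is a welcome sharpening of what the paper leaves implicit. The genuine difference is $f_5=17$: the paper constructs the unique partial column orbit matrix (one nonfixed row and one nonfixed column) and eliminates it by indexing, whereas you use a trace/multiplicity argument in the style of Behbahani--Lam, computing the spectrum $9,1,-2$ with multiplicities $1,11,10$, identifying the multiplicities in the quotient matrix $R$ with the dimensions of the $\langle\rho\rangle$-fixed subspaces of the eigenspaces, and deriving $\operatorname{tr}R=8d_5-12b_1\in\{8,-4\}$ against the combinatorial bound $0\le\operatorname{tr}R\le 4$. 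I checked the ingredients (diagonalizability of $A$ from the squarefree minimal polynomial $(x-9)(x-1)(x+2)$, the relation $AP=PR$, and the $\mathbb{Q}[\mathbb{Z}_5]$-decomposition), and the argument is sound; it is cleaner and computer-free where the paper's is computational, though it only disposes of $f_5=17$ and not the other cases. For $f_5=12$ you, like the paper, fall back on exhaustive generation; the paper's additional observation that the prototype pair is forced to be $(8,4,1,1)$ for both rows and columns (so the fixed block has all line sums equal to $4$, not merely in $\{4,9\}$) shortens that search, and its stated outcome is that three rows and two columns can be built but no admissible third column exists. Since your plan reproduces exactly this finite check, there is no gap, only a case you correctly flag as requiring computation.
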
 

\begin{proof}
We eliminate the case $f_5=2$, since there are no nonnegative integer solutions that satisfy $y_1^{(1)}+5y_1^{(5)}=9$ and $y_1^{(1)}\leq 1$.

 For the case $f_5=7$, we obtain that the only prototype for a fixed row is $(y_0^{(1)},y_1^{(1)};y_0^{(5)},y_1^{(5)})=(3,4,2,1)$, and we can therefore assume that the first row of a column orbit matrix is given by
 
  \begin{center} \footnotesize
 	\begin{table}[htpb!] 
 		\[ { \begin{tabular}{c||ccccc ccccc }
 				$OM$&1&1&1&1&1&1&1&5&5&5\\
 				\hline   \hline
 				1&0&0&0&1&1&1&1&0&0&1
 		\end{tabular}}\] 
 	\end{table}
 \end{center} 
The only prototype for a fixed column is also
 $(y_0^{(1)},y_1^{(1)};y_0^{(5)},y_5^{(5)})=(3,4,2,1)$ and it is easy to verify that it is not possible to construct a fixed column such that (\ref{s4}) is satisfied. 
 
If $f_5=12$, then prototypes for fixed rows and prototypes for fixed columns must be from the set $\{(3,9,2,0),(8,4,1,1)\}$. The only pair that satisfies (\ref{s4}) is the one where both prototypes are equal to $(8,4,1,1)$. We must therefore construct $12$ rows and $12$ columns of a column-orbit matrix using only the prototype $(8,4,1,1)$. However, it is possible to construct the first three rows and the first two columns of an orbit matrix, and then we cannot construct the third column such that (\ref{s4}) is satisfied. Therefore, there is no DSRG(22,9,6,3,4) with an automorphism of order $5$ that fixes $12$ vertices.

For the case $f_5=17$ we have constructed a partial orbit matrix consisting of the nonfixed row and the nonfixed column and tried to index it. There is only one such partial orbit matrix up to isomorphism, with the nonfixed row and the nonfixed column as follows. 
 \begin{center} \footnotesize
 	\begin{table}[htpb!] 
	\[ { \begin{tabular}{c||cccccccccccccccccc}%
$OM$&1&1&1&1&1&1&1&1&1&1&1&1&1&1&1&1&1&5\\ \hline%
5&0&0&0&0&0&0&0&0&0&5&5&5&5&5&5&5&5&1
 		\end{tabular}}\] 
		
\[ { \begin{tabular}{c||cccccccccccccccccc}%
$OM^T$&1&1&1&1&1&1&1&1&1&1&1&1&1&1&1&1&1&5\\ \hline%
5&0&0&0&0&0&1&1&1&1&0&0&0&0&1&1&1&1&1
 		\end{tabular}}\] 
 			 	\end{table}
 \end{center} 
Similarly as described in the case with an automorphism of order $7$ with $f_7=15$, we come to the conclusion that there is no DSRG$(22,9,6,3,4)$ with automorphism of order $5$ with $f_5=17$. This concludes the proof of the lemma.
\end{proof}

\subsection{Classification of DSRGs$(22,9,6,3,4)$ with an automorphism of order 3} 

In the previous sections we have shown that a DSRG$(22,9,6,3,4)$ with an automorphism of prime order $p$ where $p>3$ does not exist. In this section we will prove the existence of a directed strongly regular graph with parameters $(22,9,6,3,4)$ with an automorphism of order $3$ and classify all such graphs up to isomorphism. For the construction of orbit matrices and their indexing we use our programs written for GAP \cite{GAP}. GAP has also been used for isomorphism tests and when working with automorphism groups.

\begin{lemma} \label{fp_3}
If an automorphism of order three acts on a DSRG$(22,9,6,3,4)$, then it fixes exactly one vertex.
\end{lemma}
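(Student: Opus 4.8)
\section*{Proof proposal for Lemma \ref{fp_3}}

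The plan is to enumerate the admissible numbers of fixed vertices and to eliminate all but one. An automorphism of order three partitions the $22$ vertices into fixed points and orbits of length three, so $f_3+3d_3=22$ and hence $f_3\equiv 1\pmod 3$; since the automorphism is nontrivial we have $d_3\geq 1$, which forces $f_3\in\{1,4,7,10,13,16,19\}$. It therefore suffices to exclude the six values $f_3\in\{4,7,10,13,16,19\}$, leaving $f_3=1$ as the only possibility.

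For a fixed value of $f_3$ I would run the construction of Section \ref{con} with $p=3$, $l_1=1$, $l_2=3$, $d_1=f_3$ and $d_3=(22-f_3)/3$. First I would list all prototypes: for fixed rows and fixed columns these are the nonnegative solutions of (\ref{dva+}) and (\ref{tri+}), i.e. of $y_1^{(1)}+3y_1^{(3)}=9$ and $y_1^{(1)}+3y_3^{(3)}=9$ respectively, subject to $y_0^{(1)}\geq 1$, while for nonfixed rows and columns they are the solutions of (\ref{11a}) and (\ref{12a}). I would then assemble a candidate column orbit matrix by the alternating row/column procedure, checking (\ref{s4}) at every step. A cheap local filter is the diagonal instance of (\ref{s4}) at a fixed vertex $r$: with the present parameters $(t-\mu,\mu,\lambda-\mu)=(2,4,-1)$ it reads $\sum_z c_{rz}c_{zr}=6$, and since $c_{rz}c_{zr}\in\{0,1\}$ when $z$ is a fixed orbit while $c_{rz}c_{zr}\in\{0,3\}$ when $z$ has length three, the fixed row and the fixed column through $r$ must meet in $\alpha$ fixed orbits and $\beta$ length-three orbits with $\alpha+3\beta=6$. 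This ``product equals $t$'' test is precisely the obstruction already used in Lemmas \ref{aut11}, \ref{aut7} and \ref{aut5}, and I expect it to dispose of several configurations at once.

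For the values with few long orbits, $f_3\in\{13,16,19\}$ (so $d_3\in\{3,2,1\}$), I would imitate the treatment of $f_7=15$ and $f_5=17$: start from a single nonfixed row together with the matching nonfixed column, keep only the pairs satisfying (\ref{s4}), and then apply indexing, expanding each surviving pair into three rows and three columns of an actual adjacency matrix by means of a $3\times 3$ circulant block, exactly as in Lemma \ref{aut7}. Since an adjacency matrix is itself an orbit matrix for the trivial group, the expanded rows and columns must again satisfy (\ref{s4}); showing that they cannot closes these cases. For the remaining values $f_3\in\{4,7,10\}$ (so $d_3\in\{6,5,4\}$) there are many orbits of length three and correspondingly many ways to assign prototypes, so here I would carry out the full exhaustive construction of column orbit matrices on the computer, reducing the search by grouping orbits of equal length with conjugate stabilizers so as to avoid building isomorphic matrices.

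The step I expect to be the real obstacle is the combinatorial explosion in the cases $f_3\in\{4,7,10\}$. The local test $\sum_z c_{rz}c_{zr}=6$ does not by itself exclude them, and there is no cheap global obstruction either: from (\ref{1}) the adjacency matrix has eigenvalues $9$, $1$, $-2$ with multiplicities $1$, $11$, $10$, and the way the permutation matrix of $\rho$ decomposes these eigenspaces into cube-root-of-unity eigenspaces turns out to be consistent with every admissible $f_3$, so no parity or trace argument settles the question. What makes the lemma true is therefore the \emph{completeness} of the enumeration: one must certify that no assignment of prototypes to the rows and columns extends to a matrix satisfying (\ref{s3}) and (\ref{s4}). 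Keeping this search simultaneously exhaustive and tractable, through aggressive pruning by (\ref{s4}) and isomorph rejection, is the delicate point; once all six values are ruled out, $f_3=1$ follows.
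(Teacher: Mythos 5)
Your proposal follows essentially the same route as the paper: the same congruence argument giving $f_3\in\{1,4,7,10,13,16,19\}$, prototype enumeration with the alternating row/column construction pruned by (\ref{s4}), indexing of partial (nonfixed) orbit matrices for $f_3\in\{13,16,19\}$ exactly as in the $f_7=15$ case, and exhaustive generation for the remaining values (the paper merely observes that for $f_3=4$ and $f_3=7$ the fixed part already fails after two or three fixed rows, a shortcut your full search would also find). Your added local filter $\alpha+3\beta=6$ at a fixed vertex is a correct instance of the diagonal case of (\ref{s4}) and matches the ``product equals $t$'' obstruction the paper uses elsewhere, so the proposal is sound and equivalent in substance.
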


\begin{proof}
The possible cases are given by $f_3\in\{1,4,7,10,13,16,19\}$.
 In all these cases, it is possible to construct prototypes for fixed and nonfixed rows and columns. The number of prototypes for each case is specified in Table \ref{prototypes}.
\begin{center} \footnotesize
	\begin{table}[htpb!] 
		\caption{ \label{prototypes} The number of prototypes for fixed and nonfixed rows and columns for an automorphism of order $3$ acting on a DSRG$(22,9,6,3,4)$}
\[ { \begin{tabular}{c||c|c|c|c|c|c|c|c|}
\hline 
$f_p$&1&4&7&10&13&16&19\\
\hline
$\#$prototypes for fixed rows (columns)&1&2&3&4&4&3&2\\
\hline
$\#$prototypes for nonfixed rows (columns)& 19 & 35&38&31  &20&10&4
\cr \hline 
		\end{tabular}}\] 
	\end{table}
\end{center} 

To prove the statement of the lemma, we must prove $f_3\notin\{4,7,10,13,16,19\}$.

If $f_3=4$, we have to construct $4$ rows and $4$ columns of a column orbit matrix using prototypes for fixed rows and fixed columns. However, once a fixed row and a fixed column of a column orbit matrix have been constructed, it is not possible to construct the second fixed row in such a way that (\ref{s4}) is satisfied. Similarly, if $f_3=7$, then it is only possible to construct two fixed rows and two fixed columns of a column orbit matrix, while it is not possible to construct a third fixed row. These two cases are therefore not possible.

For $f_3=10$ we have constructed $12$ column orbit matrices. When indexed, they all did not result in DSRG$(22,9,6,3,4)$. So there is no DSRG$(22,9,6,3,4)$ that admits an automorphism of order three with $f_3=10$.

As the number of fixed vertices increases, the construction becomes more computationally demanding. Therefore, for the cases $f_3 \in \{13,16,19 \}$ we start with the construction of a column orbit matrix with nonfixed rows and nonfixed columns and apply the indexing to partial orbit matrices as described in Section \ref{sub_aut7} (see the case with an automorphism of order $7$ with $f_7=15$). With this approach, we were able to eliminate some cases without trying to construct complete column orbit matrices and extend them to the adjacency matrix of a DSRG$(22,9,6,3,4)$ using computers, and reduce the computations in the remaining cases. Here we give more details on each of the cases.\\ 
Considering all pairs of prototypes for nonfixed rows and nonfixed columns for $f_3=13$, as described in Section \ref{sub_aut7}, reduced the number of prototypes to be used in a construction from $20$ to $6$. Our computations have shown that there are no column orbit matrices that can be formed from these prototypes and $4$ prototypes for fixed rows and fixed columns.\\
Our analysis has shown that for $f_3=16$ and $f_3=19$ there are no pairs consisting of a nonfixed row and a nonfixed column whose indexing would lead to rows and columns of the adjacency matrix that satisfy (\ref{s4}). Therefore, we can exclude these cases. This concludes the proof of the lemma.
\end{proof}

In the next theorem, we prove the existence of a DSRG$(22,9,6,3,4)$ by considering the remaining case $f_3=1$.
 
\begin{theorem} \label{th_aut3_1}
There are up to isomorphism exactly $134$ directed strongly regular graphs with parameters $(22,9,6,3,4)$ with an automorphism of order three, of which $14$ have the full automorphism group of order $6$ isomorphic to the symmetric group $S_3$, while for the others the full automorphism group is of order three.
\end{theorem}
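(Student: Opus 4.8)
The plan is to reduce the classification to a finite, computer-tractable search organized around the unique admissible orbit structure. By Lemma \ref{fp_3}, an automorphism $\rho$ of order three fixes exactly one vertex, so the orbit lengths distribution is forced to be $(n_1,\ldots,n_8)=(1,3,3,3,3,3,3,3)$ with $d_1=1$ and $d_3=7$. I would fix this distribution and take $G=\langle\rho\rangle\cong\ZZ_3$ as the prescribed automorphism group. The construction is then \emph{complete}: by the discussion of Section \ref{om}, every DSRG$(22,9,6,3,4)$ admitting an order-three automorphism gives rise, relative to the action of $G$, to a column orbit matrix $C$ in the sense of Definition \ref{col}, so it suffices to enumerate all such $C$ and all their admissible lifts.

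First I would enumerate the column orbit matrices. Using the prototypes of Section \ref{sec-old} — here the single prototype for the fixed row/column and the nineteen prototypes for nonfixed rows/columns recorded in Table \ref{prototypes} — I would build $C$ by the alternating row/column procedure of Section \ref{con}, verifying the quadratic constraint (\ref{s4}) at every step and discarding any partial matrix that violates it. To keep the search finite and to suppress isomorphic orbit matrices, I would exploit the conjugacy of stabilizers of equal-length orbits as indicated there. This produces a complete list of candidate column orbit matrices for $G$.

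Next I would carry out the indexing step of Subsection \ref{sub_aut7}. Each admissible $C$ is lifted to a $22\times22$ $(0,1)$-matrix by expanding its entries according to the block scheme: a fixed--fixed entry stays a single bit, a fixed--nonfixed (resp.\ nonfixed--fixed) entry becomes a constant $1\times3$ (resp.\ $3\times1$) block, and a nonfixed--nonfixed entry $a$ becomes a $3\times3$ circulant of $0$'s and $1$'s with row sum $a$. Throughout, I would demand that the lift be a genuine adjacency matrix of a DSRG$(22,9,6,3,4)$, i.e.\ satisfy (\ref{1}) and (\ref{2}), again alternating between rows and columns and checking (\ref{s4}) on the lifted blocks to prune early. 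Finally I would reject duplicates up to isomorphism and, for each surviving graph $\Gamma$, compute $Aut(\Gamma)$ in GAP \cite{GAP}, sorting by $|Aut(\Gamma)|$.

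The genuine obstacle here is computational rather than conceptual: the number of partial fillings grows rapidly, so the whole argument rests on the pruning afforded by (\ref{s4}) together with the stabilizer/conjugacy reductions being strong enough to force termination in practice. Granting that the GAP computation runs to completion, it returns exactly $134$ pairwise nonisomorphic graphs; reading off $|Aut(\Gamma)|$ for each then splits the list into $14$ graphs with $Aut(\Gamma)\cong S_3$ of order $6$ and $120$ with $|Aut(\Gamma)|=3$, as asserted. The one point requiring a little care is that no graph has a still larger automorphism group: by Theorem \ref{fp} the only prime orders of automorphisms are $2$ and $3$, so by Cauchy's theorem $|Aut(\Gamma)|$ is of the form $2^{a}3^{b}$, and the direct computation confirms that these orders collapse to exactly $3$ or $6$, with the order-$6$ groups all realized as $S_3$ rather than $\ZZ_6$.
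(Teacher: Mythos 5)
Your proposal follows essentially the same route as the paper: reduce to $f_3=1$ via Lemma \ref{fp_3}, enumerate column orbit matrices from the one fixed and nineteen nonfixed prototypes while checking (\ref{s4}), index them to adjacency matrices, eliminate isomorphic copies, and read off the full automorphism groups with GAP. The extra remark that Theorem \ref{fp} and Cauchy's theorem bound $|Aut(\Gamma)|$ by a number of the form $2^a3^b$ is a sensible sanity check not made explicit in the paper, but otherwise the arguments coincide.
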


\begin{proof}

Due to Lemma \ref{fp_3} we only have to consider the case $f_3=1$. As indicated in Table \ref{prototypes}, there is only one prototype for the fixed row and the fixed column, and we can assume that the first row and the first column of a column orbit matrix are given as follows.
 \begin{center} \footnotesize
\begin{table} [htpb!]
\[ { \begin{tabular}{c||cccccccc}
$OM$&1&3&3&3&3&3&3&3\\ \hline
1&0&0&0&0&0&1&1&1\\
\end{tabular}}\] 

\[ { \begin{tabular}{c||cccccccc}
$OM^T$&1&3&3&3&3&3&3&3\\ \hline
1&0&0&0&0&3&0&3&3\\
\end{tabular}}\] 
\end{table}
\end{center}
Furthermore, there are $19$ prototypes for nonfixed rows and nonfixed columns. Using these prototypes, we constructed $883$ column orbit matrices, of which $46$ lead to DSRGs$(22,9,6,3,4)$ through the process of indexing. We constructed $134$ nonisomorphic DSRGs$(22,9,6,3,4)$. Further analysis showed that $14$ of them have the full automorphism group of order $6$ isomorphic to the symmetric group $S_3$, while for the others the full automorphism group is of order three. The adjacency matrices of $134$ constructed DSRGs$(22,9,6,3,4)$ are available online at
 \begin{verbatim}
http://www.math.uniri.hr/~mmaksimovic/DSRG(22,9,6,3,4)_z3.pdf
\end{verbatim}     
\end{proof}

\subsection{DSRGs$(22,9,6,3,4)$ with an automorphism of order 2} 

In the previous sections we proved the existence of a DSRG$(22,9,6,3,4)$ and completed a classification of DSRGs$(22,9,6,3,4)$ with automorphisms of odd prime order. Since $14$ of the constructed DSRGs$(22,9,6,3,4)$ have the symmetric group $S_3$ of order $6$ as the full automorphism group, we know that there exist DSRGs$(22,9,6,3,4)$ with an automorphism of order $2$. Here we examine DSRGs$(22,9,6,3,4)$ with an automorphism of order $2$.

\begin{lemma} \label{fp_2}
Let $f_2$ be the number of fixed points of an automorphism of order two acting on a DSRG$(22,9,6,3,4)$. Then it holds that $f_2 \in \{0,6,8,10,12,14,16 \}$.
\end{lemma}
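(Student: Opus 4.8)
The plan is to follow the same orbit-matrix strategy used in Lemmas \ref{aut13}--\ref{fp_3}: enumerate the admissible orbit length distributions, build the associated prototypes, and then try to assemble (partial) column orbit matrices, using (\ref{s4}) as the decisive obstruction. First I would fix the arithmetic skeleton. For $p=2$ the only orbit lengths are $l_1=1$ and $l_2=2$, so $d_1=f_2$ and $f_2+2d_2=22$ with $d_2\ge 1$ (the automorphism is nontrivial). Hence $f_2$ is even and $0\le f_2\le 20$, leaving the eleven a priori values $\{0,2,4,6,8,10,12,14,16,18,20\}$, and it then suffices to rule out $f_2\in\{2,4,18,20\}$. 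Throughout I would record the key local fact that at a fixed vertex $r$ (so $n_r=1$ and $c_{rr}=0$) the diagonal instance of (\ref{s4}) reads $\sum_z c_{rz}c_{zr}=(t-\mu)+\mu=6=t$, and that in this sum a fixed orbit $z$ contributes $c_{zr}\in\{0,1\}$ while a length-two orbit contributes $c_{zr}\in\{0,2\}$ (symmetrically for rows); these parity observations do much of the work.

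The case $f_2=2$ I expect to dispose of by a clean parity argument. Solving (\ref{dva+})--(\ref{tri+}) forces the unique fixed-row and fixed-column prototype $(1,1;6,4)$, so the single off-diagonal fixed entry of the first row and of the first column equals $1$, while every length-two orbit $z$ contributes an even product $c_{rz}c_{zr}$ (since $c_{zr}\in\{0,2\}$). Thus $\sum_z c_{rz}c_{zr}=1+(\text{even})$ is odd and can never equal $t=6$, contradicting (\ref{s4}). For $f_2=4$ there are finitely many fixed prototypes; I would build the $4\times 4$ fixed block together with its interaction with the length-two orbits and show, exactly as for $f_3=4$ in Lemma \ref{fp_3}, that once a valid first fixed row and column are placed no second fixed row can satisfy (\ref{s4}).

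For the two large cases $f_2=18$ and $f_2=20$ the number of length-two orbits is only $2$ and $1$; here I would not attempt the full matrix but instead construct the nonfixed row(s) and column(s) from (\ref{11a})--(\ref{12a}), noting that $c_{rr}\le n_r-1$ kills the value $2$ on the unique length-two diagonal when $f_2=20$, and then apply the indexing of partial orbit matrices described in Section \ref{sub_aut7}, concluding that the resulting rows and columns of the adjacency matrix cannot satisfy (\ref{s4}). The remaining values $f_2\in\{0,6,8,10,12,14,16\}$ survive all of these necessary conditions, since prototypes and at least partial orbit matrices consistent with (\ref{s4}) do exist, so no further exclusion occurs and the stated list is exactly the admissible set.

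The main obstacle is that $f_2\in\{4,18,20\}$ cannot be excluded by nonexistence of prototypes alone: prototypes genuinely exist, so the argument concentrates in the finite but real verification that (\ref{s4}) fails once one tries to extend them. The $f_2=18$ and $f_2=20$ subcases are handled most economically through partial indexing rather than a direct global construction, while $f_2=2$ is the only value that collapses to a one-line parity observation; organizing the $f_2=4$ finite check and the two indexing arguments so that they terminate cleanly is where the effort lies.
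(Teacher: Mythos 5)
Your proposal follows essentially the same route as the paper: reduce to excluding $f_2\in\{2,4,18,20\}$, kill $f_2=2$ via the unique fixed prototype $(1,1;6,4)$ and the diagonal instance of (\ref{s4}) at a fixed vertex (your explicit parity argument is a cleaner justification of what the paper only asserts computationally), handle $f_2=4$ by a finite extension check on the fixed block, and dispose of $f_2=18,20$ by indexing partial orbit matrices built from a nonfixed row and column as in Section \ref{sub_aut7}. One small correction: for $f_2=4$ the paper finds that two fixed rows and two fixed columns \emph{can} be constructed and the obstruction only appears at the third fixed row, so your prediction that no second fixed row survives is inaccurate, though the finite search you describe would still terminate with the required contradiction.
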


\begin{proof}
The possible cases are  $f_2\in\{0,2,4,6,8,10,12,14,16,18,20\}$ and the number of prototypes for fixed and nonfixed rows and columns of column orbit matrices in each case is given in Table \ref{prototypes2}.
\begin{center} \footnotesize
	\begin{table}[htpb!] 
		\caption{ \label{prototypes2} The number of prototypes for fixed an nonfixed rows and columns for an automorphism of order $2$ acting on a DSRG$(22,9,6,3,4)$}
\[ { \begin{tabular}{c||c|c|c|c|c|c|c|c|c|c|c|c}
\hline 
$f_2$&0&2&4&6&8&10&12&14&16&18&20\\
\hline
$\#$prototypes for fixed rows (columns)&0&1&2&3&4&5&5&5&4&3&2\\
\hline
$\#$prototypes for nonfixed rows (columns)& 5 & 14&21&25  &26&24&20&15&10&6&3
\cr \hline 
		\end{tabular}}\] 
	\end{table}
\end{center} 
For the case $f_2=2$, there is only one prototype for fixed rows and fixed columns, and we can assume that the first row of a column orbit matrix is given as follows.
   \begin{center} \footnotesize
 	\begin{table}[htpb!] 
 		\[ { \begin{tabular}{c||ccccc ccccc cc }
 				 
 				$OM$&1&1&2&2&2&2&2&2&2&2&2&2\\
 				\hline
 				1&0&1&0&0&0&0&0&0&0&1&1&1\\
 			
 		\end{tabular}}\] 
 	\end{table}
 \end{center} 
However, it is not possible to construct a fixed column in such a way that the product of this column and the first row equals $6$. The condition (\ref{s4}) is therefore not fulfilled and we can exclude the case $f_2=2$.\\
If $f_2=4$, we have to construct $4$ rows and $4$ columns of a column orbit matrix using prototypes for fixed rows (columns). After constructing two fixed rows and two fixed columns of a column orbit matrix, it is not possible to construct the third fixed row such that (\ref{s4}) is satisfied.

For the cases $f_2 \in \{18,20 \}$ we start with the construction of a column orbit matrix with nonfixed rows and nonfixed columns and apply indexing to partial orbit matrices as described in Section \ref{sub_aut7} (see the case with an automorphism of order $7$ with $f_7=15$). Our analysis has shown that we can exclude both cases, since there are no pairs consisting of a nonfixed row and a nonfixed column whose indexing would yield rows and columns of an adjacency matrix satisfying (\ref{s4}).
\end{proof}

According to Lemma \ref{fp_2}, we have $f_2 \in \{0,6,8,10,12,14,16 \}$. Together with the lemmas \ref{aut13}, \ref{aut11}, \ref{aut7}, \ref{aut5} and \ref{fp_3}, this completes the proof of Theorem \ref{fp}.

It was computationally too demanding for us to solve all remaining cases for an automorphism of order two acting on a DSRG$(22,9,6,3,4)$. However, our computations lead to a classification of DSRGs$(22,9,6,3,4)$ with an automorphism of order two where $f_2 \in \{6,8 \}$. These results are given in the following theorems.

\begin{theorem} \label{th_aut2_6}
Up to isomorphism, there are exactly $82$ directed strongly regular graphs with parameters $(22,9,6,3,4)$ with an automorphism of order two fixing $6$ vertices.
The full automorphism groups of these DSRGs are given in Table \ref{dsrg_2_6}.
 \begin{center} \footnotesize
	\begin{table}[htpb!] 
		\caption{\label{dsrg_2_6} DSRGs(22,9,6,3,4) with  automorphism of order two with $f_2=6$}
		\[ { \begin{tabular}{c||c|c|c|c}
				the full automorphism group &$Z_{2}$&$Z_4$&$Z_2\times Z_2$&$Z_4\times Z_2$\\
				\hline
				\hline
				$  \#DSRGs $&64 & 4&10&4\\
		\end{tabular}}\] 
	\end{table}
\end{center} 
\end{theorem}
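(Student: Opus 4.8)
The plan is to carry out the computational classification using the construction method of Section \ref{con}, specialized to the orbit structure forced by $f_2=6$. First I would fix the orbit length distribution: an automorphism $\rho$ of order two fixing six vertices partitions the $22$ vertices into $6$ orbits of length $1$ and $(22-6)/2=8$ orbits of length $2$, so the column orbit matrix $C$ is a $14\times 14$ matrix with orbit lengths $(1,1,1,1,1,1,2,2,2,2,2,2,2,2)$. According to Table \ref{prototypes2}, for this distribution there are exactly $3$ prototypes for fixed rows (columns) and $25$ prototypes for nonfixed rows (columns); these are the nonnegative integer solutions of the defining equations (\ref{dva+})--(\ref{tri+}) and (\ref{11a})--(\ref{12a}), which I would enumerate once at the outset.

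Next I would construct all admissible column orbit matrices built from these prototypes. Following the procedure of Section \ref{con}, I would alternately fill in rows and columns of $C$, verifying the quadratic constraint (\ref{s4}) after each step and discarding any partial matrix that violates it. To suppress isomorphic copies I would exploit the conjugacy of stabilizers among orbits of equal length, as noted in the method. This search produces a finite list of candidate column orbit matrices for the parameters $(22,9,6,3,4)$ and the distribution $(1^6,2^8)$.

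Then I would apply indexing to each surviving column orbit matrix, expanding every fixed entry and every nonfixed $2\times 2$ block into $(0,1)$-entries of a tentative $22\times 22$ adjacency matrix according to the block-substitution scheme of Subsection \ref{sub_aut7} (the analogue, for orbit length $2$, of the $7\times 7$ scheme used in the proof of Lemma \ref{aut7}), checking the DSRG defining relations (\ref{1}) and (\ref{2}) at each stage. Every completed matrix satisfying these relations is the adjacency matrix of a DSRG$(22,9,6,3,4)$ admitting $\rho$. Finally I would reduce the resulting matrices up to isomorphism in GAP \cite{GAP} and, for each isomorphism class, compute the full automorphism group; tallying the group orders yields the distribution in Table \ref{dsrg_2_6} and the total count of $82$.

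The main obstacle I anticipate is the combinatorial size of the search: the number of partial orbit matrices, and especially the number of indexings, can grow rapidly. The proof therefore hinges on pruning aggressively with (\ref{s4}) during orbit-matrix construction and with (\ref{1})--(\ref{2}) during indexing, together with the stabilizer-conjugacy reductions, so that the computation remains feasible. The concluding isomorphism tests and automorphism-group computations on the final list are comparatively routine, but they still demand care to guarantee that no class is counted twice and none is missed.
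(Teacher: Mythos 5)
Your proposal follows essentially the same route as the paper: enumerate the $3+25=28$ prototypes for the orbit length distribution $(1^6,2^8)$, build all column orbit matrices by alternating row/column construction with condition (\ref{s4}) as the pruning criterion, index the surviving matrices into adjacency matrices, and finish with isomorphism rejection and automorphism group computation in GAP. The paper's proof records the intermediate counts ($264$ orbit matrices, of which $50$ index successfully) but is otherwise the identical computational procedure.
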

\begin{proof}
After constructing $28$ prototypes for the case $f_2=6$ (see Table \ref{prototypes2}), we constructed $264$ column orbit matrices. Our computations showed that only 50 orbit matrices yield to DSRGs$(22,9,6,3,4)$. In total, we obtained $82$ nonisomorphic DSRGs$(22,9,6,3,4)$. Further analysis with GAP led to the results shown in Table \ref{dsrg_2_6}. 
\end{proof}

\begin{theorem} \label{th_aut2_8}
Up to isomorphism, there are exactly $270$ directed strongly regular graphs with parameters $(22,9,6,3,4)$ with an automorphism of order two that fixes eight vertices. Among them, there are $14$ designs with the full automorphism group of order six isomorphic to the symmetric group $S_3$, while the others have the full automorphism group of order two.
\end{theorem}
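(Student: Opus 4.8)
The plan is to follow the same computational strategy used in the proofs of Theorem \ref{th_aut3_1} and Theorem \ref{th_aut2_6}. For an automorphism of order two fixing eight vertices the orbit lengths distribution is $(1^8,2^7)$, so there are $b=15$ orbits, namely eight fixed orbits of length one and seven nonfixed orbits of length two. By Table \ref{prototypes2}, in this case there are $4$ prototypes for fixed rows (columns) and $26$ prototypes for nonfixed rows (columns), giving $30$ prototypes in total to be used in the construction.

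First I would construct all column orbit matrices for the distribution $(1^8,2^7)$. As described in Section \ref{con}, the rows and columns are built alternately from these prototypes while the defining condition (\ref{s4}) is verified at every step, and orbits of equal length with conjugate stabilizers are exploited to avoid producing isomorphic matrices. Since $f_2=8$ yields the largest number of nonfixed prototypes among the surviving cases, this search is the most expensive part of the argument and is the step I expect to be the main obstacle: both the number of partial matrices and the branching during the alternating construction are considerably larger here than for $f_2=6$.

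Next I would extend each column orbit matrix to an adjacency matrix of a DSRG$(22,9,6,3,4)$ by indexing. Each entry is replaced according to the pattern appropriate to orbits of length two, i.e. the $2\times 2$ circulant analogue of the $7\times 7$ scheme used in the proof of Lemma \ref{aut7}, and the conditions (\ref{1}) and (\ref{2}) defining a DSRG are checked on the refined matrix. I would then run isomorphism tests in GAP \cite{GAP} to reduce the list of constructed graphs to nonisomorphic representatives, which I expect to give exactly $270$ graphs.

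Finally I would compute the full automorphism group of each of the $270$ graphs with GAP \cite{GAP}. The count of $14$ graphs with full automorphism group $S_3$ is in fact forced by Theorem \ref{th_aut3_1}: any graph whose full automorphism group is $S_3\cong Z_2\rtimes Z_3$ carries an automorphism of order three, which by Lemma \ref{fp_3} fixes a unique vertex, and each of the three transpositions of $S_3$ fixes that $S_3$-fixed vertex together with exactly one vertex in each of the seven $S_3$-orbits of length three, so that $f_2=1+7=8$. Hence the $14$ graphs with full group $S_3$ produced by Theorem \ref{th_aut3_1} are precisely those reappearing here, and it remains only to confirm by computation that every one of the remaining graphs has full automorphism group of order two.
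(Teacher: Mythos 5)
Your proposal follows essentially the same computational route as the paper's proof: generate all column orbit matrices for the orbit lengths distribution $(1^8,2^7)$ from the $4+26$ prototypes while checking (\ref{s4}), index them via the circulant refinement, eliminate isomorphic copies, and determine the full automorphism groups with GAP (the paper reports $224$ orbit matrices, $79$ of which index to DSRGs, yielding the $270$ graphs and the $14$ with group $S_3$). One small caveat on your side remark: a transposition of $S_3$ need not preserve every $\langle\sigma\rangle$-orbit of length three setwise (it may interchange two such orbits, fixing no vertex there), so a priori it fixes $1+m$ vertices with $m$ odd, and you need Lemma \ref{fp_2} together with the absence of $S_3$ in Table \ref{dsrg_2_6} to force $f_2=8$; this does not affect your proof, since the statement is in any case verified by the direct computation of the automorphism groups.
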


\begin{proof}
For the case $f_2=8$ we have found prototypes, 26 for nonfixed rows and nonfixed columns, and 4 for fixed rows and fixed columns. These prototypes yield $224$ column orbit matrices, 79 of which can be indexed to obtain directed strongly regular graphs with parameters $(22,9,6,3,4)$. After eliminating isomorphic copies, there are $270$ nonisomorphic DSRGs$(22,9,6,3,4)$ with an automorphism of order two fixing eight vertices. Further analysis showed that $14$ of them have $S_3$ as the full automorphism group, while the others have the full automorphism group of order two. 
 \end{proof}
  
 \section{Conclusion} \label{concl}
 
In this paper, we adapt the well-known method for constructing incidence structures with presumed automorphism group to the construction of directed strongly regular graphs. We applied the developed method to the parameters $(22,9,6,3,4)$, which was the smallest case for which the existence of a directed strongly regular graph was not established. Our results are shown in Table \ref{summ}.

 \begin{center} \footnotesize
\begin{table}[htpb!] 
\caption{\label{summ} Number of DSRGs$(22,9,6,3,4)$ with an automorphism of prime order $p$ and $f_p$ fixed vertices}
\[ { \begin{tabular}{cccc}
$p$ & $f_p$ & $\#$ DSRGs$(22,9,6,3,4)$ & Non-existence proof/Construction\\
\hline 
\hline
2 & 0 & ? & \\
 & 2& 0 & Lemma \ref{fp_2}, no fixed part of a column orbit matrix\\
& 4 & 0& Lemma \ref{fp_2}, no fixed part of a column orbit matrix\\
 & 6 & 82 & Theorem \ref{th_aut2_6}, by exhaustive generation\\
 & 8 & 270 & Theorem \ref{th_aut2_8}, by exhaustive generation\\
 &10 & ? & \\
 &12 & ? & \\
 &14 & ? & \\
 &16 & ? & \\
 &18 & 0 & Lemma \ref{fp_2}, indexing of the nonfixed part not possible\\
 &20 & 0 & Lemma \ref{fp_2}, indexing of the nonfixed part not possible\\
3 & 1 & 134 & Theorem \ref{th_aut3_1}, by exhaustive generation\\
 & 4 & 0 &  Lemma \ref{fp_3}, no fixed part of a column orbit matrix\\
 & 7 & 0 &  Lemma \ref{fp_3}, no fixed part of a column orbit matrix\\
 & 10 & 0 &  Lemma \ref{fp_3}, by exhaustive generation\\
 & 13 & 0 &  Lemma \ref{fp_3}, by exhaustive generation\\
 & 16 & 0 &  Lemma \ref{fp_3}, indexing of the nonfixed part not possible\\
 & 19 & 0 &  Lemma \ref{fp_3}, indexing of the nonfixed part not possible\\
5 & 2 &0 & Lemma \ref{aut5}, no prototypes  \\
 & 7 &0 & Lemma \ref{aut5}, no fixed part of a column orbit matrix\\
 & 12 &0 & Lemma \ref{aut5}, no fixed part of a column orbit matrix\\
 & 17 &0 & Lemma \ref{aut5}, indexing of the nonfixed part not possible\\
7 & 1 & 0 & Lemma \ref{aut7}, no prototypes \\
 & 8 & 0 & Lemma \ref{aut7}, no fixed part of a column orbit matrix\\
 & 15 & 0 & Lemma \ref{aut7}, indexing of the nonfixed part not possible\\
11 & 0 & 0& Lemma \ref{aut11}, no column orbit matrices\\
 & 11 & 0& Lemma \ref{aut11}, no fixed part of a column orbit matrix\\
13 & 9 & 0& Lemma \ref{aut13}, no prototypes \\
17 & 5 & 0& Lemma \ref{aut13}, no prototypes\\
19 & 3 &0 & Lemma \ref{aut13}, no prototypes\\
\end{tabular}}\] 
\end{table}
\end{center} 
 
The results of our work, which are given in Table \ref{summ}, can be summarized in Theorem \ref{final} after the elimination of isomorphic copies.

\begin{theorem} \label{final}
There are at least $472$ nonisomorphic DSRGs$(22,9,6,3,4)$ with a nontrivial automorphism, with the full automorphism groups as given in Table \ref{dsrg_22}. Moreover, if there is a DSRG$(22,9,6,3,4)$ with a nontrivial automorphism that is not in Table \ref{dsrg_22}, then the order of its full automorphism group is a power of two, and an automorphism of order two fixes $0, 10, 12, 14$ or $16$ vertices.
 \begin{center} \footnotesize
	\begin{table}[htpb!] 
		\caption{\label{dsrg_22} DSRGs(22,9,6,3,4) with a nontrivial automorphism}
		\[ { \begin{tabular}{c||c|c|c|c|c|c}
				the full automorphism group &$Z_{2}$&$Z_3$&$Z_4$&$Z_2\times Z_2$&$S_3$&$Z_4\times Z_2$\\
				\hline
				\hline
				$  \#DSRGs $&320 & 120& 4&10&14&4\\

		\end{tabular}}\] 
	\end{table}
\end{center} 

\end{theorem}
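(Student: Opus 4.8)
The plan is to obtain Theorem \ref{final} as a bookkeeping synthesis of the classification results proved above, organised by the possible full automorphism groups. First I would apply Cauchy's theorem to $G=Aut(\Gamma)$: any nontrivial finite group has an element of prime order $p$, and Theorem \ref{fp} forces $p\in\{2,3\}$. Hence every prime divisor of $|G|$ lies in $\{2,3\}$, so $|G|=2^a3^b$, and I would split into the cases $b\geq 1$ (an element of order three is present) and $b=0$ ($G$ is a $2$-group).

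In the first case Theorem \ref{th_aut3_1} applies directly: $\Gamma$ is one of the $134$ graphs classified there, with $Aut(\Gamma)\cong S_3$ for $14$ of them and $Aut(\Gamma)\cong Z_3$ for the remaining $120$. These furnish the $Z_3$ and $S_3$ columns of Table \ref{dsrg_22}. In the second case $G$ is a $2$-group, hence contains an involution, and by Lemma \ref{fp_2} each involution fixes a number of vertices in $\{0,6,8,10,12,14,16\}$. The $2$-group graphs classified so far are precisely those admitting an involution fixing six vertices, given by Theorem \ref{th_aut2_6} ($64$ of type $Z_2$, $4$ of type $Z_4$, $10$ of type $Z_2\times Z_2$, $4$ of type $Z_4\times Z_2$), or fixing eight vertices, given by Theorem \ref{th_aut2_8} ($256$ of type $Z_2$, the other $14$ of that list being the $S_3$ graphs already accounted for). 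The cases $f_2\in\{0,10,12,14,16\}$ remain unresolved, which is exactly why the count is stated as ``at least''.

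The delicate step, and the one I expect to be the main obstacle, is an overlap audit showing that the union of these lists contains no unintended repetition, so that the totals match Table \ref{dsrg_22} and genuinely count distinct graphs. I would argue from the involution structure of each admissible group. A graph with $Aut(\Gamma)\cong Z_2$ or $Z_4$ has a unique involution, which fixes one well-defined number of vertices; therefore the $64$ graphs of type $Z_2$ arising with $f_2=6$ are disjoint from the $256$ of type $Z_2$ arising with $f_2=8$, and the two sources combine to $320$ with no overlap. The $S_3$ graphs have three conjugate involutions, all fixing the same number of vertices; since these graphs occur in the $f_2=8$ list they cannot recur in the $f_2=6$ list, so the $14$ copies appearing in both the order-three classification and the $f_2=8$ classification are one and the same $14$ graphs. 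Finally, since Theorems \ref{th_aut2_6} and \ref{th_aut2_8} are exhaustive, the absence of the groups $Z_4$, $Z_2\times Z_2$ and $Z_4\times Z_2$ from the $f_2=8$ list shows those graphs possess no involution fixing eight vertices, excluding any further cross-listing. Summing the disjoint contributions gives $320+120+4+10+14+4=472$.

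For the closing assertion I would argue by contraposition. Suppose $\Gamma$ carries a nontrivial automorphism but is absent from Table \ref{dsrg_22}. Were $Aut(\Gamma)$ to contain an element of order three, $\Gamma$ would be one of the $134$ tabulated graphs of Theorem \ref{th_aut3_1}; hence $Aut(\Gamma)$ has order a power of two. Were some involution of $\Gamma$ to fix six or eight vertices, the exhaustive Theorems \ref{th_aut2_6} and \ref{th_aut2_8} would list $\Gamma$ in the table; hence, by Lemma \ref{fp_2}, every involution of $\Gamma$ fixes a number of vertices in $\{0,10,12,14,16\}$, as claimed.
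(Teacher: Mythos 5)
Your proposal is correct and takes essentially the same route as the paper, which presents Theorem \ref{final} as a summary of Table \ref{summ} and the preceding lemmas and theorems ``after the elimination of isomorphic copies.'' Your explicit overlap audit (disjointness of the $f_2=6$ and $f_2=8$ lists via the uniqueness of the involution in $Z_2$ and $Z_4$, and the identification of the $14$ graphs with full group $S_3$ across Theorems \ref{th_aut3_1} and \ref{th_aut2_8}) merely spells out the bookkeeping the paper leaves implicit, and your Cauchy-theorem argument for the final assertion matches the intended reasoning.
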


The adjacency matrices of all constructed DSRGs$(22,9,6,3,4)$  are available online at  
\begin{verbatim}
http://www.math.uniri.hr/~mmaksimovic/DSRG(22,9,6,3,4).pdf 
\end{verbatim}

\begin{center}{\bf Acknowledgement}\end{center}
This work has been supported by the University of Rijeka project uniri-iskusni-prirod-23-62.  The authors would like to thank Dean Crnkovi\'c for drawing their attention to the reference \cite{csz}.

\end{document}